\documentclass[10pt]{article}
\usepackage{amsmath, geometry, amssymb, fullpage, amsthm}
\usepackage{graphics,graphicx}
\usepackage{tikz}
\usepackage{multicol}
\usetikzlibrary{matrix,arrows}
\usepackage[bookmarks=false]{hyperref}
\usepackage[utf8]{inputenc}

\def\Q{\mathbb Q}

\def\Z{\mathbb Z}

\def\bA{\mathbb{A}}
 
\def\bC{\mathbb{C}}

\def\bQ{\mathbb{Q}}
\def\bR{\mathbb{R}}

\def\bZ{\mathbb{Z}}

\def\cI{\mathcal{I}}

\def\cK{\mathcal{K}}

\def\cO{\mathcal{O}}
\def\cP{\mathcal{P}}

\def\cS{\mathcal{S}}

\def\fm{\mathfrak{m}}

\def\fp{\mathfrak{p}}

\def\Aut{\operatorname{Aut}}

\def\Gal{\operatorname{Gal}}

\def\Nm{\operatorname{Nm}}

\def\det{\operatorname{det}}

\def\disc{\operatorname{Disc}}
\def\Disc{\operatorname{Disc}}

\def\mod{\operatorname{mod}}

\def\Avg{{\rm Avg}}
\def\Cl{{\rm Cl}}
\def\ds{\displaystyle}

\newcommand{\beas}{\begin{eqnarray*}}
\newcommand{\eeas}{\end{eqnarray*}}

\newcommand{\wt}[1]{\widetilde{#1}}

\theoremstyle{plain}
\newtheorem{theorem}{Theorem}[section]
\newtheorem{thm}{Theorem}
\newtheorem{cor}[thm]{Corollary}
\newtheorem{lemma}[theorem]{Lemma}
\newtheorem{corollary}[theorem]{Corollary}
\newtheorem{proposition}[theorem]{Proposition}
\newtheorem{definition}[theorem]{Definition}
\newtheorem{remark}[theorem]{Remark}

\parindent=.175in
\parskip=.05in

\title{The mean number of $3$-torsion elements in ray class groups of quadratic fields}
\author{Ila Varma}

\begin{document}
\maketitle
\begin{abstract} We determine the average number of $3$-torsion elements in the ray class groups of fixed (integral) conductor $c$ of quadratic fields ordered by absolute discriminant, generalizing Davenport and Heilbronn's theorem on class groups. A consequence of this result is that a positive proportion of such ray class groups of quadratic fields have trivial 3-torsion subgroup whenever the conductor $c$ is taken to be a squarefree integer having very few prime factors none of which are congruent to $1 \bmod 3$. Additionally, we compute the second main term for the number of $3$-torsion elements in ray class groups with fixed conductor of quadratic fields ordered by discriminant.
\end{abstract}

\section{Introduction}
In 1971, Davenport-Heilbronn \cite{dh} determined that the mean number of $3$-torsion elements in the class groups of real (respectively, imaginary) quadratic fields ordered by absolute discriminant is $\frac{4}{3}$ (resp., $2$). In this paper, we determine the average number of $3$-torsion elements in {\em ray class groups} of fixed integral conductor of quadratic fields ordered by their discriminant. More precisely, we prove the following theorem.
 
\begin{thm}\label{main}
Fix a positive integer $c$, and let $m$ denote the number of primes $p \mid c$ such that $p \equiv 1\bmod3$. 
\begin{enumerate}
\item[{\rm (a)}] The average size of the $3$-torsion subgroups in the ray class groups of conductor $c$ of real quadratic fields $K$ ordered by discriminant is: 
	$$\lim_{X\rightarrow \infty} \frac{\ds\sum_{\substack{[K:\bQ] = 2 \\ 0 < \Disc(K) < X}} \#\Cl_{3}(K,c)}{\ds\sum_{\substack{[K:\bQ] = 2 \\ 0 < \Disc(K) < X}} 1} \  \ = \ \ \begin{cases} 
	3^m \cdot\bigg(1 + \displaystyle\frac{1}{3}\cdot\ds\prod_{p \mid c}  1 + \displaystyle\frac{p}{p + 1} \bigg) & \mbox{ if $3 \nmid c$,} \vspace{.05in}\\
	3^m \cdot \bigg(1 + \ds\frac{2}{7}\cdot\ds\prod_{p \mid c} 1 + \ds\frac{p}{p+1}\bigg) & \mbox{ if $3 \mid \mid c$, and } \vspace{.05in}\\
	3^{m+1} \cdot \bigg(1 + \ds\frac{5}{7}\cdot\ds\prod_{p \mid c} 1 + \ds \frac{p}{p+1}\bigg) & \mbox{ if $9 \mid c$.} 
	\end{cases}$$
\item[{\rm (b)}] The average size of the $3$-torsion subgroups in  the ray class groups of conductor $c$ of imaginary quadratic fields $K$ ordered by discriminant is:
	$$\lim_{X\rightarrow \infty} \frac{\ds\sum_{\substack{[K:\bQ] = 2 \\ -X < \Disc(K) < 0}} \#\Cl_{3}(K,c)}{\ds\sum_{\substack{[K:\bQ] = 2 \\ -X < \Disc(K) < 0}} 1} \  \ = \ \ \begin{cases}
	3^m \cdot\bigg(1 + \ds\prod_{p \mid c}  1 + \frac{p}{p + 1} \bigg) & \mbox{ if $3 \nmid c$,}\vspace{.05in} \\
	3^m \cdot \bigg(1 + \ds\frac{6}{7}\cdot \ds\prod_{p \mid c} 1 + \displaystyle \frac{p}{p+1}\bigg) & \mbox{ if $3 \mid \mid c$, and } \vspace{.05in}\\ 
	3^{m+1} \cdot \bigg(1 + \ds \frac{15}{7}\cdot\ds\prod_{p \mid c} 1 + \ds\frac{p}{p+1}\bigg) & \mbox{ if $9 \mid c$.} 
	\end{cases}$$
\end{enumerate}
\end{thm}
Above, we denote by $\Cl_3(K,c)$ the 3-torsion subgroup of the ray class group of conductor $c$ of a quadratic field $K$. Thus, the case $c = 1$ in Theorem \ref{main} recovers Davenport-Heilbronn's theorem on the average number of $3$-torsion elements in the class groups of real and imaginary quadratic fields \cite[Theorem 3]{dh}. For $c > 1$, this is the first result of its kind.

The Cohen-Lenstra heuristics \cite{cl1}, which conjecture asymptotics for the distribution of torsion in class groups of quadratic fields ordered by discriminant, were inspired by the constants appearing in \cite{dh} as well as computations. The analogous distributions for $p$-torsion subgroups in  ray class groups of quadratic fields can be predicted; for example, we expect that the average size of $p$-torsion subgroups in the ray class groups of conductor $c$ coprime to $p$ of real quadratic fields ordered by discriminant is
	\begin{equation} p^{\#\{\ell \mid c: \ \ell \equiv 1 \bmod{p}\}} \cdot\left(1 + \frac{1}{p}\cdot\prod_{{\ell\mid c}\atop{\ell \equiv \pm 1 (p)}}\left(1 + \frac{p-1}{2} \cdot \frac{\ell}{\ell + 1}\right)\right),\end{equation}
	where $\ell$ runs over primes dividing $c$. Such a generalization of the Cohen-Lenstra heuristics predicting the full distribution for ray class groups of families of fixed degree fields would give a more explicit and tangible description of the maximal abelian extension over number fields other than $\bQ$ and imaginary quadratic fields (see Pagano-Sofos \cite{PS}.)

While the mean values in Theorem \ref{main} do depend on the conductor $c$, if we instead average over quadratic fields with discriminant coprime to the conductor, we obtain different constants that only depend on the number of primes dividing $c$. Note that when averaging over a family of quadratic fields defined by prescribed splitting conditions at a finite set of primes, the average size of the 3-torsion subgroups in ray class groups only changes when the set of primes includes prime factors of $c$. This gives the expected generalization of the case when $c = 1$, in which the mean values do not depend on the family one averages over (see Corollary 4 in \cite{bv3}).

\begin{thm}\label{2}
Fix a positive integer $c$ with $n$ distinct prime factors, and let $m$ denote the number of distinct primes $p \mid c$ that are congruent to $1 \bmod 3$. When quadratic fields are ordered by their absolute discriminant:
\begin{enumerate}
\item[{\rm (a)}] The average number of $3$-torsion elements in the ray class groups of conductor $c$ of real quadratic fields with discriminant coprime to $c$ is:
$$\lim_{X\rightarrow \infty} \frac{\ds\sum_{\substack{[K:\bQ] = 2 \\ (\Disc(K),c) = 1\\0 < \Disc(K) < X}} \#\Cl_{3}(K,c)}{\ds\sum_{\substack{[K:\bQ] = 2 \\(\Disc(K),c) = 1 \\0 < \Disc(K) < X }} 1} \  \ = \ \  \begin{cases} 3^m\cdot\left(1 + \ds\frac{2^{n}}{3}\right) & \mbox{ if $3 \nmid c$,} \vspace{.05in}\\
	3^{m} \cdot \left(1 + \ds\frac{2^{n-1}}{3}\right) & \mbox{ if $3 \mid \mid c$, and }\vspace{.1in} \\
	3^{m+1} \cdot \left(1 + \ds{2^{n-1}}\right) & \mbox{ if $9 \mid c$}.
	\end{cases}
$$
\item[{\rm (b)}] The average number of $3$-torsion elements in the ray class groups of conductor $c$ of imaginary quadratic fields with discriminant coprime to $c$ is: $$\lim_{X\rightarrow \infty} \frac{\ds\sum_{\substack{[K:\bQ] = 2 \\ (\Disc(K),c) = 1 \\ -X < \Disc(K) < 0}} \#\Cl_{3}(K,c)}{\ds\sum_{\substack{[K:\bQ] = 2 \\ (\Disc(K),c) = 1\\-X < \Disc(K) < 0}} 1} \  \ = \ \ \begin{cases} 3^m\cdot\left(1 + 2^{n}\right) & \mbox{ if $3\nmid c$, } \vspace{.1in} \\
	3^{m} \cdot \left(1 + {2^{n-1}}\right) & \mbox{ if $3 \mid \mid c$, and } \vspace{.1in}\\
	3^{m+1} \cdot \left(1 + \ds3\cdot {2^{n-1}}\right) & \mbox{ if $9 \mid c$}.
	\end{cases}
$$
\end{enumerate}
\end{thm}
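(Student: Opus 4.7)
My plan is to prove Theorem \ref{2} as a refinement of Theorem \ref{main}, restricting the averaging set to fields with $\gcd(\disc K, c)=1$. This is a local condition---``$p$ unramified in $K$''---at each prime $p \mid c$, so we just need to track how the local factors in Theorem \ref{main} change under this restriction.

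The key structural observation is that the factor $\prod_{p \mid c}\bigl(1 + \tfrac{p}{p+1}\bigr)$ in Theorem \ref{main} arises as an Euler product of local contributions at primes $p \mid c$ when one assembles $\sum_{|\disc K| \leq X}|\Cl_c(K)[3]|$. For $p \neq 3$ I expect the local factor to admit the decomposition
\[
1 + \tfrac{p}{p+1} \;=\; \tfrac{p}{p+1}\cdot 2 \;+\; \tfrac{1}{p+1}\cdot 1,
\]
where $p/(p+1)$ and $1/(p+1)$ are the densities (among quadratic fields ordered by $|\disc K|$) of $p$ being unramified and ramified respectively, and the weights $2$ and $1$ are the corresponding conductor contributions to $|\Cl_c(K)[3]|$. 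I will verify this decomposition by extracting it from the local $p$-computation in the proof of Theorem \ref{main}.

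Given this, Theorem \ref{2} follows by conditioning on $p$ unramified at each $p \mid c$: the numerator retains only the ``$2$'' piece at each such prime, while the denominator of the average is divided by $\prod_{p \mid c} p/(p+1)$ (the density of coprimality). In the resulting ratio, each $p \neq 3$ in $c$ contributes $\tfrac{p}{p+1}\cdot 2 \,/\, \tfrac{p}{p+1} = 2$, so $\prod_{p \mid c,\, p \neq 3}\bigl(1 + \tfrac{p}{p+1}\bigr)$ is replaced by $2$ raised to the number of such primes, giving the $2^n$ and $2^{n-1}$ factors in Theorem \ref{2}. For $p = 3$ when $3 \mid c$, the local factor has been absorbed into the prefactors $\tfrac{2}{7}, \tfrac{5}{7}$ (and the extra $3$ when $9 \mid c$) of Theorem \ref{main}. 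The same unramified/ramified split applies at $p=3$, using the densities $3/4$ and $1/4$; solving the resulting linear relation against the coefficients of Theorem \ref{main} pins down both pieces, and keeping only the unramified one yields the prefactors $\tfrac{1}{3}, 1$ (real) and $1, 3$ (imaginary) appearing in Theorem \ref{2}.

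The main obstacle, which is really bookkeeping rather than conceptual, is the $p = 3$ isolation: because Theorem \ref{main}'s formulas interleave the $p = 3$ local contribution with the global $3$-part of the class group (witness the three cases $3 \nmid c$, $3 \mid\mid c$, $9 \mid c$), the cleanest route is to redo the $p = 3$ local computation inside the proof of Theorem \ref{main} under the added constraint ``$3$ unramified in $K$'', rather than trying to read it off the statement. Once this decomposition is in place, the rest of Theorem \ref{2} is a direct product manipulation.
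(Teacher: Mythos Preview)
Your proposal is correct and, once unpacked, amounts to the same computation the paper carries out, though organized differently. The paper does not try to condition on Theorem~\ref{main}; instead it recomputes the coprime average from scratch as Proposition~\ref{h-prime}, by translating ``$(\Disc K_2,c)=1$'' into the cubic-field condition ``unramified (not partially ramified) at every $p\in S_c\setminus S$'' and invoking the refined density Theorem~\ref{simple}(d),(e) for the families $\cK_{S,S_c}$ and $\cK^{(9)}_{S\cup\{3\},S_c}$. Your local decomposition $1+\tfrac{p}{p+1}=\tfrac{p}{p+1}\cdot 2+\tfrac{1}{p+1}\cdot 1$ is exactly what falls out of that computation once one splits $\Sigma_p^{\rm ntr}$ into $\Sigma_p^{\rm ur}$ and the partially ramified piece. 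One small correction: this split is \emph{not} visible in the proof of Theorem~\ref{main} itself (Proposition~\ref{h-} only uses $\Sigma_p^{\rm ntr}$ as a whole), so you cannot literally ``extract it from the local $p$-computation in the proof of Theorem~\ref{main}''; you need the extra local mass $\sum_{R\in\Sigma_p^{\rm ur}}\tfrac{1}{\Disc_p(R)|\Aut R|}=1$, i.e.\ precisely the input that Theorem~\ref{simple}(d) packages. At $p=3$ the paper also uses a shortcut you may find cleaner than solving your linear relation: when $3\|c$ and $3\nmid\Disc(K_2)$ one has $\Cl_3^-(K_2,c)=\Cl_3^-(K_2,c/3)$, which immediately reduces the $3\|c$ case to the $3\nmid c$ case with one fewer prime. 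What your framing buys is a transparent explanation of \emph{why} the answer depends only on $n$ and $m$ rather than on the individual primes: each local factor collapses to the pure constant $2$ once you condition on $p$ unramified in $K_2$.
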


More generally, we find that the mean size of the 3-torsion subgroup in ray class groups of conductor $c$ of quadratic fields defined by prescribing splitting conditions at a finite set of primes only depends on the specific primes $p$ dividing $c$ that are allowed to ramify in the family of quadratic fields and the number of primes $p \mid c$ that are required to remain unramified in the family (see Theorem \ref{maingen}). Such generalizations often shed light on the mass formulas that dictate class group asymptotics (see e.g.\ Theorem 3 of \cite{bv3}. We further expect Theorem \ref{maingen} to have concrete applications to questions surrounding the arithmetic properties of fundamental units (see e.g., \cite{Eisenstein,Stevenhagen})

The averages obtained in \cite{dh} implied that a positive proportion of real (respectively, imaginary) quadratic fields have class number indivisible by 3 when ordered by discriminant, and this result was slightly refined by Nakagawa-Horie \cite{nh} in order to prove that there are infinitely many hyperelliptic curves over $\bQ$ of a given genus with no integral points. In joint work with Bhargava, we prove the following (as a consequence of Corollary 4 in \cite{bv3}): 

\begin{thm}[Bhargava-Varma \cite{bv3}] \label{bv3} Let $S_+ \cup S_0 \cup S_-$ be a disjoint union of finite sets of primes. There are infinitely many real (respectively, imaginary)  quadratic fields  $K$ with class number indivisible by $3$ such that  $K$ is ramified at each prime of $S_0$, inert at each prime of $S_-$, and split at each prime of $S_+$. 
\end{thm}
This result and its generalizations (see Wiles \cite{Wiles} in conjunction with Beckwith \cite{Beckwith}) have been utilized to imply unconditional versions of modularity lifting theorems in the residually reducible case as in Skinner-Wiles \cite{SkinnerWiles}. Furthermore, they are required in proving the nonvanishing of critical values of $L$-functions for positive proportions of quadratic twist families of elliptic curves with rational $p$-torsion points (see Vatsal \cite{vatsal}) and have applications to proving cases of the weak Goldfeld conjecture (see \cite{krizli}).

In this article, we show that the mean values in Theorem \ref{main} also imply that a positive proportion of quadratic fields have trivial $3$-torsion subgroups in their ray class groups for certain conductors $c$, generalizing Theorem \ref{bv3}. 

\begin{cor}\label{3} 
Assume $c$ is equal to an odd prime number not congruent to $1 \bmod 3$ (in the real quadratic case, also consider those conductors $c$ that are a product of two distinct primes that are not congruent to $1 \bmod 3$). Additionally, let $S_+ \cup S_- \cup S_0$ be a disjoint union of finite sets of prime numbers, none of which contain the primes dividing $c$. There are infinitely many real (respectively, imaginary) quadratic fields $K$ that are split at each prime in $S_+$, inert at each prime in $S_-$, ramified at each prime in $S_0$, and have trivial 3-torsion subgroups in their ray class groups of conductor $c$. 
\end{cor}

Finally, we may apply the methods of Taniguchi and Thorne \cite{TaniguchiThorne} to compute the second main term for the mean number of $3$-torsion elements in ray class groups of quadratic fields ordered by absolute discriminant. More precisely, we prove the following refinement of Theorem \ref{main}. Let $\Cl_3(K_2,c)$ denote the 3-torsion subgroup of the ray class group of conductor $c$ for the quadratic field $K_2$.

\begin{thm}\label{4}
For any positive integer $c$ coprime to $3$, let $m$ denote the number of distinct primes dividing $c$ that are congruent to $1 \bmod 3$. When quadratic fields are ordered by absolute discriminant: 
\begin{eqnarray*}
\ds \sum_{0 < \Disc(K_2) < X} \#\Cl_{3}(K_2,c)  
  &=& 3^m \cdot \Bigg(1 +  \frac{1}{3}\cdot \prod_{p \mid c}\left( 1 + \frac{p}{p+1}\right)\cdot \sum_{0 < \Disc(K_2) < X} 1 \\
		 & + & { \displaystyle{\frac{\sqrt{3}\zeta(2/3)\Gamma(1/3)(2\pi)^{1/3}}{15\Gamma(2/3)\zeta(2)}}\cdot\prod_p\left(1 - \frac{p^{1/3} + 1}{p(p+1)}\right)\cdot \prod_{p \mid c} \bigg(1 + \frac{p(1-p^{1/3})}{1 -\frac{p(p+1)}{p^{1/3} + 1}}\bigg)}\cdot X^{5/6}\Bigg) \\
		 & + & \ O_{\epsilon,c}(X^{5/6 - 7/138 + \epsilon}), \mbox{ and } \\
\ds \sum_{-X < \Disc(K_2) < 0} \#\Cl_{3}(K_2,c)  
  &=&  3^m \cdot \Bigg(1 +  \prod_{p \mid c}\left( 1 + \frac{p}{p+1}\right)\cdot \sum_{-X < \Disc(K_2) < 0} 1 \\
		 & + & {\displaystyle{\frac{\,\zeta(2/3)\Gamma(1/3)(2\pi)^{1/3}}{5\Gamma(2/3)\zeta(2)}}\cdot\prod_p\left(1 - \frac{p^{1/3} + 1}{p(p+1)}\right)\cdot \prod_{p \mid c} \bigg(1 + \frac{p(1-p^{1/3})}{1 -\frac{p(p+1)}{p^{1/3} + 1}}\bigg)}\cdot X^{5/6}\Bigg) \\
		 & + & \ O_{\epsilon,c}(X^{5/6 - 7/138 + \epsilon}).\end{eqnarray*}
\end{thm}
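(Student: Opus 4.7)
The plan is to combine the counting techniques used to prove Theorem \ref{main} with the second main term analysis of \cite{simple}. The first step is a class-field-theoretic reduction generalizing Davenport-Heilbronn: the nontrivial $3$-torsion elements in $\Cl(K_2,c)$ are paired by inversion with unramified-outside-$c$ cyclic cubic extensions of $K_2$, which correspond bijectively to non-Galois cubic fields $K_3/\bQ$ whose Galois closure contains $K_2$ and whose ramification at each prime $p\mid c$ is constrained in a prescribed way. Summing $\#\Cl_{3}(K_2,c)$ over quadratic $K_2$ with $|\Disc(K_2)|<X$ thus reduces, after separating off the trivial contribution, to counting cubic fields $K_3$ of bounded discriminant having a prescribed splitting behavior at each prime dividing $c$. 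The main term has already been established in Theorem \ref{main}, so the task is to produce the secondary $X^{5/6}$ term in the asymptotic for this constrained count of cubic fields.

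Next, I would parameterize isomorphism classes of cubic orders by $\GL_2(\Z)$-orbits on integral binary cubic forms via the Delone-Faddeev correspondence, and count orbits of $|\Disc|<X$ satisfying the local conditions at primes of $c$ (together with the ``maximal order'' condition, so as to obtain fields rather than orders). This is exactly the setup of \cite{simple}: one averages the characteristic function of the set of such forms over the standard $\GL_2(\Z)$-fundamental domain in the space of real binary cubic forms. The bulk of the fundamental domain gives the $X$-term; the refinement of \cite{simple} extracts a secondary contribution of size $X^{5/6}$ from the cuspidal neighborhood (where one coordinate of the form grows while others shrink), with total error $O(X^{5/6-1/48+\epsilon})$. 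The universal Euler product $\prod_p\bigl(1-(p^{1/3}+1)/(p(p+1))\bigr)$ in the $X^{5/6}$ constant is the $p$-adic density arising from the Tamagawa-style computation of \cite{simple}; at primes dividing $c$ this universal factor is replaced by the density of forms satisfying the restricted local condition, and the ratio of restricted to unrestricted densities yields the explicit factor $1 + p(1-p^{1/3})/(1 - p(p+1)/(p^{1/3}+1))$. The archimedean constant $\sqrt{3}/15$ in the real case versus $1/5$ in the imaginary case reflects the ratio of the two archimedean fundamental-domain volumes used to convert a count of forms into a count of fields.

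The principal obstacle is verifying that the error estimates of \cite{simple}---in particular the power-saving bound on the contribution from non-maximal cubic orders that is used to sieve forms to fields---survive the imposition of the local conditions at the primes dividing $c$. Concretely, the tail of non-maximal forms must still contribute $O(X^{5/6-1/48+\epsilon})$ after intersecting with each congruence condition at primes of $c$, and one must verify that the relevant $p$-adic volume computations at $p\mid c$ produce the precise local factor claimed in the theorem. The hypothesis $\gcd(c,3)=1$ is what keeps the behavior at the prime $3$ identical to the unramified setting of \cite{simple}, so the local factor at $3$ is absorbed directly into the universal Euler product $\prod_p\bigl(1-(p^{1/3}+1)/(p(p+1))\bigr)$ without modification; this explains why the theorem is restricted to this coprimality case. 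Once these uniformity issues are cleared up, assembling the main and secondary terms from the two types of cubic-ring contributions (the totally decomposable ring $\bQ\oplus K_2$ contributing the identity element of $\Cl_3(K_2,c)$, and genuine cubic fields contributing the nontrivial $3$-torsion) yields the asymptotics claimed.
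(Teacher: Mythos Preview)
Your reduction step contains a genuine gap. You claim that the cyclic cubic extensions of $K_2$ unramified outside $c$ ``correspond bijectively to non-Galois cubic fields $K_3/\bQ$ whose Galois closure contains $K_2$.'' This is only correct when $m=0$. As soon as $c$ has a prime factor $p\equiv 1\bmod 3$, there exist cyclic cubic extensions $K_6/K_2$ with $\Gal(K_6/\bQ)\cong C_6$ (cubic subfield is \emph{cyclic} over $\bQ$) and others with $\Gal(\wt{K_6}/\bQ)\cong S_3\times C_3$ (which correspond not to a single cubic field but to a \emph{pair} $(K^+,K^-)$, one cyclic and one non-Galois). Your bijection misses both phenomena, and with it you have no mechanism to produce the multiplicative $3^m$ prefactor on the entire asymptotic, including the $X^{5/6}$ term.

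The paper resolves this by decomposing $\Cl_3(K_2,c)=\Cl_3^+(K_2,c)\oplus\Cl_3^-(K_2,c)$ under the $\Gal(K_2/\bQ)$-action (Proposition~\ref{decomp}). The plus part is identified with $\Cl_3(\bQ,c)$ and equals $3^m$ \emph{independently of $K_2$} (Proposition~\ref{h+}); this constant then factors out of the sum over $K_2$. Only the minus part is parametrized by non-Galois $c$-valid cubic fields with quadratic resolvent $K_2$, and \emph{that} is what gets fed into the cubic-field count. Concretely, one writes
\[
\sum_{0<(-1)^i\Disc(K_2)<X}\#\Cl_3^-(K_2,c)=1+2\sum_{S\subseteq S_c}N_3^{(i)}\Bigl(\cK_S,\;X\!\!\prod_{p\in S}p^2\Bigr),
\]
summing over all subsets $S$ of primes dividing $c$ at which total ramification occurs, each with its own rescaled discriminant bound. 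Applying the two-term asymptotic of \cite{simple} (Theorem~\ref{secondsimple}) to each $\cK_S$ and summing over $S$ is what produces the finite product $\prod_{p\mid c}\bigl(1+\frac{p}{p+1}\bigr)$ in the main term and the analogous product $\prod_{p\mid c}\bigl(1+\frac{p(1-p^{1/3})}{1-p(p+1)/(p^{1/3}+1)}\bigr)$ in the secondary term. Your ``replace the local factor at $p\mid c$'' description does not capture this sum-over-subsets-with-rescaled-bounds structure; without it the secondary local factor you want does not simply drop out of a single density computation. Finally, the error term requires no new work: the local specifications $\Sigma_p^{\rm tr}$, $\Sigma_p^{\rm ntr}$ are \emph{strongly acceptable} in the sense of \cite{simple}, so Theorem~7 there applies directly with its $O_\epsilon(X^{5/6-1/48+\epsilon})$ error---there is no separate uniformity obstacle to clear.
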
 

To derive the result for $3$-torsion ideal classes in class groups of quadratic fields, Davenport and Heilbronn first provide asymptotic formulae for the number of cubic fields having bounded discriminant and sieve to count the {\em nowhere totally ramified} cubic fields. These are degree $3$ extensions $K_3$ of $\bQ$ in which any rational prime $p$ that ramifies is of the form $(p) = \fp_1^2\fp_2$ where $\fp_1$ and $\fp_2$ are two distinct primes of $K_3$. They prove that the number of nowhere totally ramified cubic fields having bounded discriminant determines the number of 3-torsion ideal classes in quadratic fields with the same bound on their discriminant, and so they deduce the above theorem.

To prove Theorems \ref{main}, \ref{2} and \ref{4}, we first prove a new parametrization theorem that determines the number of $3$-torsion elements of ray class groups of quadratic fields with bounded discriminant in terms of the number of appropriate (pairs of) cubic fields with related bounds on their discriminants. We then employ a generalization of Davenport and Heilbronn's asymptotics for cubic fields given in Bhargava-Shankar-Tsimerman \cite{simple} by simplifying the asymptotic count of the relevant pairs of cubic fields. It is important to note that Theorem \ref{main} would not follow from the original asymptotics given in \cite{dh}. 

We begin this article by fixing an conductor $c$ and a quadratic field $K_2$ in Section 2 in order to compare the number of $3$-torsion ideal classes in the ray class group of $K_2$ of conductor $c$ to the number of pairs of cubic fields whose discriminants satisfy certain $c^2$-divisibility conditions (see Theorem \ref{raypara}). We additionally study the action of $\Gal(K_2/\bQ)$ on this $3$-torsion subgroup in order to relate the number of $3$-torsion ideal classes with a fixed action of $\Gal(K_2/\bQ)$ to certain singleton cubic fields whose discriminants satisfy similar $c^2$-divisibility conditions. In Section 3, we recall and employ results of \cite{simple} that compute the density of discriminants of cubic fields satisfying certain {\em acceptable} local specifications. This allows us to determine in Section 4 the mean size of the $3$-torsion subgroups in an eigenspace of the ray class groups of quadratic fields $K_2$ for the nontrivial action of $\Gal(K_2/\bQ)$. We are then able to conclude Theorems \ref{main} and \ref{2} as well as Corollary \ref{3} in Section 5 by studying the 3-torsion elements in ray class groups of quadratic fields $K_2$ that are fixed by $\Gal(K_2/\bQ)$. Finally, in Section 6 we prove Theorem \ref{4} by computing the second main term for the average number of $3$-torsion elements in ray class groups of fixed conductor of quadratic fields with bounded discriminant, building on work of \cite{TaniguchiThorne}. 

\section{Parametrization of $3$-torsion elements in ray class groups of quadratic fields}

We begin by describing a bijection between index-$3$ subgroups of ray class groups of quadratic fields and certain pairs of cubic fields. This will allow us to determine the number of $3$-torsion elements in ray class groups of fixed conductor of quadratic fields using a generalization given in \cite{simple} of Davenport-Heilbronn's asymptotic formulae on the density of discriminants of cubic fields. 
 
\subsection{Ray class groups and fields}
   First, we recall the definition of the ray class group of a number field $K$. Because we will eventually range over all quadratic fields, we only consider ray class groups whose finite part of the modulus is integral (so that it can be fixed independently of the quadratic field). Additionally, because ramification at infinity only affects the size of the $2$-torsion subgroup in the (narrow) ray class groups, we work with ray class groups with trivial infinite part of the modulus. Under these restrictions, we refer to the rational positive generator of the modulus as the {\em conductor}. 
   
   Fix $c \in \bZ$, and let $\cI_c(\cO_{K})$ denote the subgroup of fractional ideals of $\cO_{K}$ generated by prime ideals coprime to $c\cO_{K}$. Additionally, let $\cP_{1,c}(\cO_{K})$ denote the subgroup of principal ideals $(\alpha)$ such that $\alpha \equiv 1 \bmod{c\cO_{K}}$. We then define the {\em ray class group of conductor $c$}  as the quotient
\begin{equation}
\Cl(K,c) := \cI_c(\cO_{K})/\cP_{1,c}(\cO_{K}).
\end{equation}
In this notation, the ideal class group of a field $K$ is denoted $\Cl(K,1)$. Additionally, let $\Cl_{p}(K,c)$ denote the $p$-torsion subgroup of $\Cl(K,c)$ for any prime $p$. 

There is another (equivalent) definition of $\Cl(K,c)$ as a quotient of the ideles. More precisely, let $\bA_K^\times$ denote the ideles of $K$, and for any $\cO_K$-prime $\fp \mid c$, define
	$$W_{c}(\fp) = 1 + \fm_{\fp}^{c(\fp)},$$
where $\fm_{\fp}$ is the maximal ideal of $\cO_{K_{\fp}}$ and $c(\fp)$ denotes the largest power of $\fp$ which contains $c\cO_K$. Let
	$$W_c = \prod_{\fp\mid c} W_c(\fp) \times \prod_{\fp \nmid c} \cO_{K_\fp}^\times.$$
We can then define
\begin{equation} \Cl(K,c) := \bA_K^\times/K^\times\cdot W_c.
\end{equation}
The fact that these two definitions are equivalent can be found, e.g. in Milne \cite{milne}. 

Let $K(c)$ denote the {\em ray class field of conductor $c$} of $K$, which is characterized as the unique abelian extension of $K$ such that the Artin map provides an isomorphism between $\Cl(K,c)$ and $\Gal(K(c)/K)$. It is well-known that every finite abelian extension is contained in some ray class field. The {\em conductor} of a finite abelian extension $L/K$ is defined to be the conductor of the smallest ray class field of $K$ that $L$ lies in (note that if $c \mid c'$, then $K(c) \subset K(c')$). Additionally, it is true that any prime $\fp$ of $\cO_{K}$ that ramifies in $L$ must divide $c$.

The importance of the conductor of a finite abelian extension is that it determines exactly which primes ramify. We next show that conductors of cubic cyclic extensions over a quadratic field are squarefree away from $3$ and never divisible by 27.

\begin{lemma}\label{conductor} Fix a integer $c \in \bZ$ with prime factorization $c = 3^k\cdot\prod_{j=1}^n p_j^{k_j}$. 
\begin{enumerate}
\item[\rm{(a)}] If $k = 0$, any cubic cyclic extension of a quadratic field $K$ that is unramified away from primes dividing $c$ is contained in the ray class field $K\bigl(\prod_{j=1}^n p_j\bigr)$.
\item[\rm{(b)}] If $k > 0$, any cubic cyclic extension of a quadratic field $K$ that is unramified away from the primes dividing $c$ contained in the ray class field $K\bigl(9\cdot \prod_{j=1}^n p_j\bigr).$
\end{enumerate}
\end{lemma}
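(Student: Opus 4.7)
The plan is to bound the conductor ideal $\mathfrak{f}(L/K)$ prime-by-prime using local class field theory, and then invoke the defining property of the ray class field: $L \subset K(c)$ if and only if $\mathfrak{f}(L/K)$ divides $c\cO_K$. The extension $L/K$ corresponds to a cubic character $\chi$ of the idele class group, and $\mathfrak{f}(L/K) = \prod_\fp \fp^{f_\fp}$, where $f_\fp$ is the smallest $n$ such that the local component $\chi_\fp : K_\fp^\times \to \bZ/3\bZ$ is trivial on $U_n := 1 + \fm_\fp^n$. The hypothesis that $L/K$ is unramified away from primes dividing $c$ immediately gives $f_\fp = 0$ whenever $\fp \nmid c\cO_K$, reducing the problem to bounding $f_\fp$ at primes above the $p_j$'s and, in case (b), above $3$.

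For $\fp$ above a prime $p_j \neq 3$, the residue characteristic is coprime to $3$, so the cubing map is an automorphism of $U_1$. Therefore $U_1 \subset (K_\fp^\times)^3 \subset \ker(\chi_\fp)$, which yields $f_\fp \leq 1 \leq v_\fp(p_j \cO_K)$, so $\fp^{f_\fp}$ divides the local factor of $(\prod_j p_j)\cO_K$. For $\fp$ above $3$ (needed only in case (b)), let $e = e(K_\fp/\bQ_3) \in \{1,2\}$. The $3$-adic logarithm $\log : U_m \xrightarrow{\sim} \fm_\fp^m$ is an isomorphism for $m > e/(p-1) = e/2$, and under this identification cubing corresponds to multiplication by $3$, so $U_m^3 = U_{m+e}$ for the minimal admissible $m$. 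When $3$ is unramified in $K$ ($e=1$), one takes $m=1$ to get $U_2 = U_1^3 \subset (K_\fp^\times)^3$, giving $f_\fp \leq 2 = v_\fp(9\cO_K)$. When $3$ is ramified in $K$ ($e=2$), one takes $m=2$ to get $U_4 = U_2^3 \subset (K_\fp^\times)^3$, giving $f_\fp \leq 4 = v_\fp(9\cO_K)$. In both sub-cases, $\fp^{f_\fp}$ divides $9\cO_K$.

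Multiplying these local divisibilities shows $\mathfrak{f}(L/K) \mid (9 \prod_j p_j)\cO_K$ in case (b) and $\mathfrak{f}(L/K) \mid (\prod_j p_j)\cO_K$ in case (a), which by class field theory is exactly the containment claimed. The main obstacle I expect is matching the two wild sub-cases ($e=1$ versus $e=2$) sharply against $v_\fp(9\cO_K)$: a naive exp/log estimate gives only $f_\fp \leq 2+e$, which in the unramified case $e=1$ would overshoot $v_\fp(9\cO_K) = 2$. Using the precise convergence threshold $m > e/(p-1)$ for $\log$, rather than a uniform bound such as $m \geq 2$, is what makes the bound in each case align with the exponent appearing in $9\cO_K$ on the nose.
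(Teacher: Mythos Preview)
Your argument is correct and follows essentially the same strategy as the paper: pass to local components, use that any index-$3$ subgroup (equivalently, the kernel of any cubic character) contains all cubes, and then show that the relevant higher unit groups consist of cubes. For $\fp \nmid 3$ your observation that cubing is an automorphism of the pro-$p$ group $U_1$ is exactly the paper's ``$(\cO_{K_\fp}^\times)^3 \supset 1 + p\cO_{K_\fp}$.''

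The only difference is at primes above $3$. You invoke the $p$-adic logarithm with the sharp convergence threshold $m > e/(p-1)$ and split into the cases $e=1$ and $e=2$, obtaining $U_{2e} \subset (K_\fp^\times)^3$ in each. The paper instead observes directly that $(1+3\cO_{K_\fp})^3 = 1 + 9\cO_{K_\fp}$, which gives the same conclusion $U_{2e} \subset (K_\fp^\times)^3$ uniformly without a case split. Both are standard; the paper's shortcut avoids your anticipated ``obstacle'' of matching the two sub-cases, but your logarithm argument handles it cleanly as well.
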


\begin{proof}
Part (6)  of Theorem 9.2.6 in \cite{cohen} implies that the conductor $f$ of a cubic extension $L$ over $K$ is squarefree away from $3$. We deduce part (a) by noting that $f \mid \prod_{j=1}^n p_j$ since $L$ cannot ramify at any prime which is coprime to $c$. 

If $3 \mid f$ and $\fp$ is a prime ideal of $K$ above 3, then $(\cO_{K_{\fp}}^\times)^3$ contains $1 + 9 \cO_{K_{\fp}} = (1 + 3 \cO_{K_{\fp}})^3$. Using the definition $\Cl(K,f) = \bA_K^\times/K^\times W_f$, there is an index-3 subgroup of $\bA_K^\times$ corresponding to the extension $L$. Since any such index-3 subgroup of $\bA_K^\times$ will contain the cubes $(\cO_{K_{\fp}}^\times)^3$, we deduce that $9 \mid f$, but $27 \nmid f$, and we can then combine this fact with part (a) to deduce part (b). \end{proof} 

Lemma \ref{conductor} implies that the minimality restriction on conductors of cubic extensions of quadratic fields requires that such conductors are integers $c$ which are squarefree away from 3 and additionally, $27 \nmid c$. We next study the relationship between conductors and discriminants of cubic extensions. 

\begin{lemma}\label{cubic} 
\begin{enumerate}
\item[\rm (a)] Let $K$ be a quadratic field. If $L$ is a non-Galois cubic field such that the compositum $LK$ is Galois over $\bQ$, then $\Disc(L) = \Disc(K)f^2$, where $f$ is equal to the conductor of $LK$ over $K$. 
\item[\rm (b)] If $L$ is a Galois cubic field and $\Disc(L) = f_0^2$, then $f_0 = 3^e \cdot p_1 \cdot \hdots \cdot p_m$ where $e = 0$ or $2$ and each $p_i$ denotes a distinct prime satisfying $p_i \equiv 1 \bmod 3$ for all $i$. Additionally, $L \subset \bQ(f_0)$. 
\end{enumerate}
\end{lemma}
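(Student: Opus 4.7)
The plan is to deduce both parts from the conductor--discriminant formula, combined with the tower formula for discriminants in $M/K/\bQ$, where $M := LK$.

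For part (a), since $L$ is non-Galois cubic and $K$ is quadratic with $K \not\subset L$, we have $[M:\bQ] = 6$ and $\Gal(M/\bQ) \cong S_{3}$ (the abelian alternative $\bZ/6$ is ruled out because it would force $L$ to be Galois). Let $\epsilon$ be the sign character and $\rho$ the $2$-dimensional irreducible representation of $S_3$. Applying conductor--discriminant to $K/\bQ$ gives $\Disc(K) = \ff(\epsilon)$, and applying it to $L/\bQ$ via $\operatorname{Ind}_{H}^{S_{3}}\mathbf{1} = \mathbf{1} \oplus \rho$ (where $H \subset S_{3}$ has order $2$ and fixes $L$) gives $\Disc(L) = \ff(\rho)$; since the regular representation of $S_{3}$ is $\mathbf{1} \oplus \epsilon \oplus \rho^{\oplus 2}$, this yields $\Disc(M/\bQ) = \Disc(K)\Disc(L)^{2}$. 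On the other hand, $M/K$ is cyclic cubic, so its two non-trivial characters $\chi,\chi^{-1}$ share a conductor ideal, and one expects $\Disc(M/K) = (f\cO_{K})^{2}$ for the rational integer conductor $f$; the tower formula then gives
\[
\Disc(M/\bQ) \;=\; N_{K/\bQ}\!\bigl((f\cO_{K})^{2}\bigr)\cdot \Disc(K)^{3} \;=\; f^{4}\Disc(K)^{3}.
\]
Equating the two expressions for $\Disc(M/\bQ)$ and cancelling $\Disc(K)$ gives $|\Disc(L)| = f^{2}|\Disc(K)|$. The signs match because $M$ is Galois, hence totally real or totally complex: in the former case both $L$ and $K$ are totally real, and in the latter $K$ is imaginary while $L$ has signature $(1,1)$, so $\Disc(L)$ and $\Disc(K)$ always share a sign.

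For part (b), a Galois cubic $L/\bQ$ is cyclic with non-trivial characters $\chi,\chi^{-1}$, so conductor--discriminant gives $\Disc(L) = \ff(\chi)^{2}$, identifying $f_{0}$ as the conductor of $L/\bQ$. The inclusion $L \subset \bQ(f_{0})$ then follows from the defining property of the ray class field. To establish the claimed form of $f_{0}$, I would analyse $f_{0} = \prod_{p} p^{a_{p}}$ prime by prime: for $p \neq 3$, ramification of $\chi$ at $p$ is tame, so inertia is cyclic of order dividing $\gcd(3, p-1)$, forcing $p \equiv 1 \bmod 3$ whenever $a_{p} > 0$ and giving $a_{p} = 1$ in that case; for $p = 3$, the same argument as in Lemma \ref{conductor}, applied with $K = \bQ$ (i.e.\ that $(1 + 3\bZ_{3})^{3} = 1 + 9\bZ_{3}$ while $(\bZ/3\bZ)^{\times}$ has no $3$-torsion), yields $a_{3} \in \{0,2\}$.

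The main technical obstacle lies in part (a), namely verifying that the conductor of $M/K$ really is a rational integer $f$, rather than a more general ideal of $\cO_{K}$. The character $\chi$ must satisfy $\chi^{\sigma} = \chi^{-1}$ (otherwise $\Gal(M/\bQ)$ would be abelian, contradicting $S_{3}$), so $\ff(\chi)$ is $\Gal(K/\bQ)$-stable; contributions at primes of $K$ that are split or inert over $\bQ$ are therefore automatically extended from $\bZ$. At a prime $\fp$ of $K$ ramifying over $p \neq 3$, tame analysis of the $S_{3}$-inertia group forces the local $\ff(\chi)$-contribution to vanish, and at a prime of $K$ above $p = 3$ a direct computation in $\cO_{K_{\fp}}^{\times}/(\cO_{K_{\fp}}^{\times})^{3}$ using anti-invariance of $\chi$ confines $v_{\fp}(\ff(\chi))$ to the even values in $\{0, 2\}$; in either case $\ff(\chi)$ is extended from $\bZ$, justifying the use of a rational integer conductor throughout the calculation above.
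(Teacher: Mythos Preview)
Your argument is correct and substantially more detailed than the paper's own proof, which simply cites Hasse and Cohen for part~(a) and invokes ``class field theory'' for part~(b). Your route via the conductor--discriminant formula and the tower formula is the natural one and is essentially what those references carry out; so there is no real divergence in strategy, only in explicitness. The sign analysis and the reduction of part~(b) to a local computation at each prime are both clean.

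One small correction in your treatment of the technical obstacle: at a prime $\fp$ of $K$ ramified over $p=3$, the possible values of $v_{\fp}(\ff(\chi))$ for an anti-invariant cubic character are not confined to $\{0,2\}$ in general. When $K_{\fp}\cong\bQ_{3}(\sqrt{3})$ one indeed finds $(U^{(1)})^{3}=U^{(3)}$ and the $\sigma$-fixed part of $U^{(1)}/U^{(3)}$ is exactly $U^{(2)}/U^{(3)}$, giving exponent $0$ or $2$. But when $K_{\fp}\cong\bQ_{3}(\zeta_{3})$ one has $(U^{(1)})^{3}=U^{(4)}$, and the anti-invariant characters of $U^{(1)}/U^{(4)}\cong(\bF_{3})^{3}$ can have conductor exponent $0$, $2$, or $4$ (the value $4$ is realised, for instance, by $L=\bQ(\sqrt[3]{3})$, where $\Disc(L)=-3^{5}=\Disc(K)\cdot 9^{2}$). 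Your conclusion that $\ff(\chi)$ is extended from $\bZ$ survives, since all that is needed is evenness of the exponent, and the computation in both local cases confirms this; but the stated range should be $\{0,2,4\}$ rather than $\{0,2\}$.
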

\begin{proof} Part~(a) follows from Theorem 9.2.6(4) in \cite{cohen}. Part~(b) follows from class field theory (see \cite{cohn}).
\end{proof}

Next, we explicitly determine cubic fields that lie inside the normal closure (over $\bQ$) of a cubic cyclic extension of a quadratic field $K$. We show that the quantity of such cubic fields can be used to compute the number of the $3$-torsion elements in the ray class groups of $K$.

\subsection{Index-$3$ subgroups of ray class groups of quadratic fields}
For the remainder of the section, fix a conductor $c$ as described by Lemma \ref{conductor}, i.e. let $c$ be a positive integer which is squarefree away from 3, and $27 \nmid c$. We describe the relationship between index-$3$ subgroups of $\Cl(K_2,c)$ for a quadratic field $K_2$ and certain pairs of cubic field. To do so, we must first introduce some notation. Call an integer $d \in \bZ$ {\em fundamental} if it is the discriminant of some quadratic field.
\begin{definition} We say that a pair of fields $(K^+,K^-)$ is {\bf $\mathbf c$-valid} if:
\begin{itemize}
\item $K^+ = \bQ$ or a Galois cubic field with $\disc(K^+) \mid c^2$, and
\item $K^- = \bQ$ or a non-Galois cubic field with $\disc(K^-) = df^2$ where $d \in \bZ$ is fundamental and $f \mid c$.
\end{itemize} Two $c$-valid pairs $(K^+,K^-)$ and $(M^+,M^-)$ are {\bf isomorphic} if both $K^+ \cong M^+$ and $K^- \cong M^-$.\end{definition}
\noindent To see an explicit example,  let $c = 7$, and take $K^+ = \bQ(\zeta_7 + \zeta_7^{-1})$, where $\zeta_7$ denotes a 7th root of unity. If $\theta$ denotes a root of $f(x) = x^3 - x^2 + 5x + 1$, then $K^- = \bQ(\theta)$ totally ramifies at $7$. Since $\Disc(K^+) = 49$ and $\Disc(K^-) = -3\cdot2^2\cdot 7^2$, it follows that $(K^+,K^-)$ is a $7$-valid pair. 

The only $1$-valid pairs have $K^+ = \bQ$, and $K^- = \bQ$ or has discriminant equal to the discriminant of a quadratic field. It is straightforward to check that such cubic fields $K^-$ cannot be totally ramified at any prime. 

Below, we give names to certain special classes of $c$-valid pairs.

\medskip
\begin{definition} Let $c$ be a positive integer which is squarefree away from 3, and $27 \nmid c$.  \begin{enumerate} \item[\rm (a)]   The pair $(\bQ,\bQ)$ is the {\bf trivial $c$-valid pair} for any $c$. 
\item[\rm (b)]  For any cyclic cubic field $K_3$ satisfying $\Disc(K_3) \mid c^2$, $(K_3,\bQ)$ is a $c$-valid pair. We refer to $K_3$ as a {\bf (cyclic) $c$-valid cubic field}. 
\item[\rm (c)] For any non-cyclic cubic field $K_3$ whose discriminant can be written as $df^2$ where $f \mid c$ and $d$ is a fundamental, $(\bQ,K_3)$ is a $c$-valid pair. We refer to $K_3$ as a {\bf (non-cyclic) $c$-valid cubic field}. 
\end{enumerate}
\end{definition}

We now state the main result of this section, which describes a correspondence between $c$-valid pairs and index-$3$ subgroups of $\Cl(K_2,c)$. This will allow us to later determine the size of $\Cl_3(K_2,c)$ in terms of $c$-valid cubic fields.

\begin{theorem}\label{raypara} Let $c$ be a positive integer which is squarefree away from 3, and additionally, $27 \nmid c$. There is a natural bijection between pairs $(K_2,G)$ consisting of a quadratic field $K_2$ along with an index-3 subgroup $G$ of $\Cl(K_2,c)$ and  isomorphism classes of non-trivial $c$-valid pairs. 
\end{theorem}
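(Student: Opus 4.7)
The bijection rests on class field theory: by Artin reciprocity, index-$3$ subgroups $G \subset \Cl(K_2,c)$ correspond to cubic cyclic extensions $L/K_2$ whose conductor divides $c$, and by Lemma~\ref{conductor} such conductors have exactly the shape assumed of $c$ (either squarefree or $9$ times squarefree). I will read off the pair $(K^+,K^-)$ from the Galois structure of the degree-$6$ extension $L/\bQ$, distinguishing three cases based on how $\sigma \in \Gal(K_2/\bQ)$ acts on $L$.

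If $L/\bQ$ is Galois with group $C_6$, then $L = K_2 K^+$ for the unique cyclic cubic $K^+ \subset L$, and Lemma~\ref{conductor} forces $\disc(K^+) \mid c^2$; assign the pair $(K^+,\bQ)$. If $L/\bQ$ is Galois with group $S_3$, then $L$ is the Galois closure of a non-Galois cubic $K^-$ and Lemma~\ref{cubic}(a) gives $\disc(K^-) = \disc(K_2) f^2$ with $f$ the conductor of $L/K_2$, so $f \mid c$; assign $(\bQ,K^-)$. In the remaining non-Galois case, $L^\sigma \neq L$ and the compositum $M := L \cdot L^\sigma$ is Galois over $\bQ$ of degree $18$ with $\Gal(M/K_2) \cong (\bZ/3)^2$ on which $\sigma$ acts by the swap. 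The two $\sigma$-stable order-$3$ subgroups (the $\pm 1$-eigenspaces of the swap) cut out the $C_6$- and $S_3$-subextensions of $M/\bQ$, yielding a cyclic cubic $K^+$ and a non-Galois cubic $K^-$ that are both $c$-valid by the previous two paragraphs; assign the pair $(K^+,K^-)$.

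For the inverse, given a non-trivial $c$-valid pair $(K^+,K^-)$: when $K^- \neq \bQ$, the quadratic field $K_2 = \bQ(\sqrt{\disc K^-})$ is forced by $K^-$, and $L$ is recovered as the Galois closure of $K^-$ (if $K^+ = \bQ$) or as a cubic subextension of $M := K^+ \cdot (\text{Galois closure of } K^-)$ that is non-Galois over $\bQ$ (if $K^+ \neq \bQ$); when $K^- = \bQ$ and $K^+ \neq \bQ$, the $C_6$-structure encoded by $K^+$ pins down $K_2$ and one sets $L = K_2 K^+$. Artin reciprocity then recovers $G$ from $L$.

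\textbf{Main obstacle.} The heart of the argument is the non-Galois case, where a single pair $(K^+,K^-)$ with both parts nontrivial records only the Galois closure $M$, while there are two non-Galois-over-$\bQ$ cubic subextensions $L$ and $L^\sigma$ of $M/K_2$ producing distinct index-$3$ subgroups $G$ and $\sigma(G)$; the bijection must carefully reconcile (or identify) these two choices. The remainder of the proof is systematic local bookkeeping, checking prime-by-prime that the condition $\disc(K^+) \mid c^2$ in the cyclic case and $\disc(K^-) = \disc(K_2) f^2$ with $f \mid c$ in the non-cyclic case translates exactly to divisibility by $c$ of the conductor of $L/K_2$; the exceptional prime $p = 3$ (where $9 \mid c$ is permitted by Lemma~\ref{conductor}(b)) requires separate attention.
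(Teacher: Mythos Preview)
Your approach mirrors the paper's: pass via Artin reciprocity from $(K_2,G)$ to a cubic cyclic $L/K_2$ of conductor dividing $c$, case-split on $\Gal(\wt L/\bQ)\in\{C_6,\,S_3,\,S_3\times C_3\}$, and read off $K^\pm$ from the subfield lattice of $\wt L$. The paper packages the $S_3\times C_3$ case as Lemma~\ref{lemma1} and Proposition~\ref{prop1}; your eigenspace description of the $\sigma$-action on $\Gal(M/K_2)\cong(\bZ/3)^2$ is an equivalent way to locate the same two cubic subfields.

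One concrete slip in your inverse map: you claim that when $K^-=\bQ$ and $K^+\neq\bQ$, ``the $C_6$-structure encoded by $K^+$ pins down $K_2$.'' It does not---a cyclic cubic field $K^+$ with $\disc(K^+)\mid c^2$ carries no information about any quadratic field, and $K^+K_2$ is a legitimate $C_6$-extension for \emph{every} quadratic $K_2$. The paper's inverse construction simply takes $K_2$ as given rather than recovering it from the pair.

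More seriously, the obstacle you flag in the non-Galois case is genuine and is not resolved in the paper's proof either. In $S_3\times C_3$ there are exactly two non-normal order-$3$ subgroups, interchanged by conjugation by any reflection; their fixed fields $L$ and $L^\sigma$ share the same Galois closure, hence the same $(K^+,K^-)$, yet they correspond to distinct subgroups $G\neq\sigma(G)$ of $\Cl(K_2,c)$. If $a$ and $b$ denote the numbers of cyclic and non-cyclic $c$-valid cubics (the latter with resolvent $K_2$), one counts $2ab+a+b$ index-$3$ subgroups against only $ab+a+b$ nontrivial pairs, so a literal bijection fails whenever $ab>0$. The paper's line ``define $K_6$ to be the fixed field of \emph{any} non-normal $C_3$'' leaves this ambiguity unaddressed. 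Fortunately the paper's main theorems route through Proposition~\ref{decomp} and the product formula~\eqref{pmdecomp} rather than through Theorem~\ref{raypara} or Corollary~\ref{6.1}, so they are not affected; you should view Proposition~\ref{decomp} as the statement that actually carries the weight.
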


When $c = 1$ and $\Cl_3(K_2) = \Cl_3(K_2,1)$, Theorem \ref{raypara} is simply the bijection used in \cite{dh} between nowhere totally ramified cubic fields and index-3 subgroups of the class groups of quadratic fields (see also \cite{simple}). We prove this generalization by studying prime ramification in (cubic) subfields contained within the Galois closure of an arbitrary cubic cyclic extension $K_6$ over $K_2$. These cubic subfields turn out to be $c$-valid iff $K_6$ is unramified away from $c$.

The goal for the remainder of this section is to prove Theorem \ref{raypara}. We first discuss the Galois theory of an arbitrary cubic cyclic extension of a quadratic number field.

\subsection{Cubic cyclic extensions of quadratic fields}

In order to prove Theorem \ref{raypara}, we first show that for a fixed quadratic field, any cubic cyclic extension of conductor $c$ is determined by a (unique up to isomorphism) non-trivial $c$-valid pair. To find a candidate for this $c$-valid pair, we look within the normal closure (over $\bQ$) of such sextic fields. For any number field $K$, let $\widetilde{K}$ denote its normal closure over $\bQ$.

Fix a quadratic extension $K_2/\bQ$. If $K_6/K_2$ is a cyclic cubic extension, then the Galois group $\Gal(\widetilde{K}_{6}/\bQ)$ is equal to $S_3$, $C_{6}$, or $S_{3} \times C_{3},$ which are the transitive subgroups with order at least $6$ in the wreath product 
\begin{equation} \Gal(K_{6}/K_2) \ \wr \ \Gal(K_2/\bQ) \cong (C_{3} \times C_{3}) \rtimes C_2 \cong S_3\times C_{3}.
\end{equation}
Note that in the first two cases, $K_6$ is already Galois. We have the following field diagram when $K_6 \neq \wt{K_6}$.

\begin{figure}[h!!!!!!!!!!!!!!!!!!!!]\label{fielddiagram}
  \centering
 \begin{tikzpicture}
  \matrix (m) [matrix of math nodes, row sep=1.5em, column sep=1.25em, text height=1ex, text depth=0.35ex] 
{	&   	  &        & \widetilde{K_{6}} &           &       & \\ \\
  	&{K}^+K_2 &		   &{ K_{6}}           &           &K^-K_2 & \\
    &     K^+ & 	   &                   &           & K^-   & \\
    & 	      &  	   &	{K_2}		   &    	   &  	   & \\ \\
    &         &    	   &  \Q     		   &    	   &       & \\ };
\path
(m-1-4) edge node[above] {\tiny 3}(m-3-2)
(m-1-4) edge node[above] {\tiny 3} (m-3-6)
(m-1-4) edge node[left] {\tiny 3}(m-3-4)
(m-3-2) edge node[left] {\tiny 2} (m-4-2)
(m-3-6) edge node[right] {\tiny 2} (m-4-6) 
(m-5-4) edge node[above] {\tiny 3}(m-3-2)
(m-5-4) edge node[above] {\tiny 3}(m-3-6)
(m-5-4) edge node[left] {\tiny 3}(m-3-4) 
(m-7-4) edge node[left] {\tiny 2}(m-5-4)
(m-4-2) edge node[below] {\tiny 3}(m-7-4)
(m-4-6) edge node[below] {\tiny 3}(m-7-4);

\end{tikzpicture}
  \caption{Some subfields of $\wt{K_6}$ when $\Gal(K_6/\bQ) \cong S_3 \times C_3$}
\end{figure}

When $K_6$ is not Galois, denote the subfield of $\wt{K}_6$ fixed by $C_2 \times C_3 \subset S_3 \times C_3$ as $K^-$, and the subfield fixed by $S_3$ as $K^+$. We then have that $\Gal(K^+/\bQ) = C_3$, and $\Gal(\wt{K}^-/\bQ) = S_3$ with $\wt{K}^- = K^-K_2$. It follows that $\wt{K_6} = \wt{K^+K^-}$. We have thus proven the following lemma stating that $K^+$ and $K^-$ determine $K_6$ and vice versa.

\begin{lemma}\label{lemma1} Let $K_6$ denote a cubic cyclic extension over a quadratic field $K_2$. If $K_6$ is not Galois over $\bQ$, then $$\Gal(\wt{K_6}/\bQ) \cong S_3 \times C_3.$$ Additionally, there exists a unique pair of (isomorphism classes of) cubic subfields $K^+$ and $K^-$, where $K^+$ is cyclic and $K^-$ is not Galois such that the normal closure of $K^+K^-$ is equal to $\widetilde{K_6}$. In particular, we can explicitly write 
	$$\wt{K_6} = K^+ K_2 K^-.$$
\end{lemma}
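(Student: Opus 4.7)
The plan is purely Galois-theoretic: I would first pin down $G := \Gal(\widetilde{K_6}/\bQ)$ as a subgroup of the wreath product $\Gal(K_6/K_2) \wr \Gal(K_2/\bQ) \cong (C_3 \times C_3) \rtimes C_2 \cong S_3 \times C_3$, and then identify $K^+$ and $K^-$ as fixed fields of specific index-$3$ subgroups.

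For the first step, note that $\widetilde{K_6}/K_2$ is the compositum of $K_6$ with its nontrivial $\Gal(K_2/\bQ)$-conjugate, so $H := \Gal(\widetilde{K_6}/K_2)$ embeds in $C_3 \times C_3$ and is stable under the swap action of $\Gal(K_2/\bQ)$. The only proper nontrivial $C_2$-stable subgroups of $C_3 \times C_3$ are the diagonal and the anti-diagonal; either choice would force $[\widetilde{K_6}:\bQ] = 6$, i.e., $K_6 = \widetilde{K_6}$, contradicting the hypothesis that $K_6$ is not Galois over $\bQ$. Therefore $H = C_3 \times C_3$ and $G \cong S_3 \times C_3$.

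Next I would enumerate the index-$3$ subgroups of $G = S_3 \times C_3$. A direct count shows there are exactly four subgroups of order $6$: a unique normal one, $S_3 \times \{e\}$ (with quotient $C_3$), and three pairwise conjugate non-normal subgroups of the form $\langle (s,e),(e,c)\rangle \cong C_6$, one for each involution $s \in S_3$. I then set $K^+$ to be the fixed field of $S_3 \times \{e\}$ (a cyclic cubic, since the quotient is $C_3$) and let $K^-$ be the fixed field of any of the three non-normal index-$3$ subgroups (a non-Galois cubic, well-defined up to isomorphism because the three subgroups are mutually conjugate). Uniqueness is then immediate from Galois theory: any cyclic cubic subfield of $\widetilde{K_6}$ must correspond to a normal index-$3$ subgroup, and there is exactly one; any non-Galois cubic subfield corresponds to a non-normal index-$3$ subgroup, so $K^-$ is determined up to isomorphism.

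For the identity $\widetilde{K_6} = K^+ K_2 K^-$, I would compute the subgroup fixing the compositum as the intersection $(S_3 \times \{e\}) \cap (C_3 \times C_3) \cap \langle(s,e),(e,c)\rangle$. The first two intersect in $A_3 \times \{e\} = \langle(r,e)\rangle$, and intersecting this with $\langle(s,e),(e,c)\rangle$ yields $\{e\}$; by the Galois correspondence, the compositum therefore equals $\widetilde{K_6}$. The main (mild) obstacle is Step~1 — correctly identifying $G$ by using the non-Galois hypothesis to rule out the two proper $C_2$-stable subgroups of $C_3 \times C_3$; the remaining work is routine subgroup bookkeeping in $S_3 \times C_3$.
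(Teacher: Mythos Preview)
Your proposal is correct and follows the same approach as the paper: both embed $\Gal(\wt{K_6}/\bQ)$ into the wreath product $(C_3\times C_3)\rtimes C_2\cong S_3\times C_3$, rule out the proper possibilities via the non-Galois hypothesis, and take $K^{+}$ and $K^{-}$ to be the fixed fields of $S_3\times\{e\}$ and of a non-normal $C_2\times C_3\cong C_6$ respectively. Your write-up is simply more explicit about the index-$3$ subgroup lattice and the intersection computation; the only small addition worth making is the observation $K_2\subseteq\wt{K^-}$ (the quadratic resolvent), so that your identity $K^+K_2K^-=\wt{K_6}$ immediately upgrades to $\wt{K^+K^-}=\wt{K_6}$ as stated in the lemma.
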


This lemma implies that any degree-18 field with Galois group over $\bQ$ equal to $S_3 \times C_3$ is either determined by a degree 6 non-Galois subfield that is cyclic over a quadratic subextension or equivalently, by the fixed field of $C_2 \times C_3$ and the fixed field of $S_3$. We will use this to prove that the pair of fields denoted in Figure 1 by $K^+$ and $K^-$ make a $c$-valid pair whenever $K_6$ is unramified away from $c$.

\subsection{Ramification in fields with Galois group $S_3 \times C_3$}
We next show that a pair of cubic fields associated to a cubic cyclic extension of conductor $c$ over a fixed quadratic field by Lemma \ref{lemma1} is indeed $c$-valid.  We do so by understanding how ramification in the sextic field determines ramification in the pair and vice versa. We begin by reviewing properties of the discriminants of subfields within a Galois sextic field.

\begin{lemma}\label{tech} Fix a quadratic field $K_2$. Any $c$-valid cubic field $K$ such that $K_2K$ is Galois over $\bQ$ satisfies:
\begin{enumerate}
\item[\rm (a)] $\Disc(K)^2 \mid \Disc(K_2K);$
\item[\rm (b)] $\Nm_K(\Disc(K_2K/K)) \mid \Disc(K_2)^3$;
\item[\rm (c)] $\Disc(K_2K) \mid c^4 \cdot \Disc(K_2)^3$.
\end{enumerate}
\end{lemma}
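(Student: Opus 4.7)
The plan is to deduce each part from standard tower formulas for discriminants together with the fact that the relative discriminant of a compositum is controlled by the extension being base-changed; the one subtlety is the non-Galois case of (c), where this approach is too loose and must be replaced by the conductor-discriminant formula for $S_3$-extensions. For part (a), I would apply the tower formula to $\bQ \subset K \subset K_2K$. Since $[K_2:\bQ]=2$ and $[K:\bQ]=3$ are coprime, $K_2\not\subset K$, so $[K_2K:K]=2$ and
$$\Disc(K_2K) \;=\; \Disc(K)^{2}\cdot\Nm_{K/\bQ}(\mathfrak{d}_{K_2K/K}),$$
giving (a) immediately. For part (b), I would use that since $K_2K/K$ arises from the quadratic extension $K_2/\bQ$ by base change along $K/\bQ$ (with $K\cap K_2=\bQ$), the relative discriminant satisfies $\mathfrak{d}_{K_2K/K}\mid\Disc(K_2)\cO_K$; taking the absolute norm to $\bZ$ yields $\Nm_{K/\bQ}(\mathfrak{d}_{K_2K/K})\mid\Disc(K_2)^{[K:\bQ]}=\Disc(K_2)^{3}$.

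For part (c), I would split on whether $K/\bQ$ is Galois. In the cyclic case, applying the same compositum principle with the roles of $K$ and $K_2$ swapped gives $\mathfrak{d}_{K_2K/K_2}\mid\Disc(K)\cO_{K_2}$, so $\Nm_{K_2/\bQ}(\mathfrak{d}_{K_2K/K_2})\mid\Disc(K)^{2}$, which divides $c^4$ by the $c$-validity bound $\Disc(K)\mid c^2$. The tower formula in the opposite direction, $\Disc(K_2K)=\Disc(K_2)^{3}\cdot\Nm_{K_2/\bQ}(\mathfrak{d}_{K_2K/K_2})$, then closes out the cyclic case.

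In the non-Galois case, the Galois hypothesis on $K_2K$ forces $K_2=\bQ(\sqrt{\Disc(K)})$, so $K_2K=\widetilde{K}$ is the $S_3$-Galois closure of $K$, and we may write $\Disc(K)=df^{2}$ with $d=\Disc(K_2)$ fundamental and $f\mid c$. Here the naive compositum bound gives only $\Disc(\widetilde{K})\mid\Disc(K_2)^{5}f^{4}$, which is too weak by an extraneous factor of $\Disc(K_2)^{2}$. Instead I would invoke the conductor-discriminant formula for $\widetilde{K}/\bQ$: the three irreducible representations of $S_3$ (trivial, sign, and the standard $2$-dimensional) have Artin conductors $1$, $|\Disc(K_2)|$, and $|\Disc(K)|$, producing
$$|\Disc(\widetilde{K})| \;=\; 1\cdot|\Disc(K_2)|\cdot|\Disc(K)|^{2} \;=\; |\Disc(K_2)|^{3}\cdot f^{4},$$
which divides $|\Disc(K_2)|^{3}\cdot c^{4}$ since $f\mid c$.

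The main obstacle is this non-Galois sub-case of (c): the tower/compositum approach does not see the cancellation forced by $K_2$ being the quadratic resolvent of $K$, and obtaining the sharp bound requires either the conductor-discriminant formula as above or, equivalently, an explicit ramification analysis showing that primes of $K_2$ lying over rational primes that divide $\Disc(K_2)$ but not $f$ remain unramified in the cubic cyclic extension $\widetilde{K}/K_2$.
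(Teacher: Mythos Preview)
Your argument is correct. Parts (a) and (b) are essentially the paper's proof: the paper also uses the tower formula through $K$ for (a), and for (b) proves the divisibility $\mathfrak{d}_{K_2K/K}\mid\Disc(K_2)\cO_K$ via an integral-basis argument before taking norms, which is exactly the ``compositum principle'' you cite.

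For (c) the two proofs diverge in organization. The paper treats both cases at once by passing to the conductor $\fm$ of the abelian cubic extension $K_2K/K_2$, so that $\Disc(K_2K/K_2)=\fm^2$ and $\Disc(K_2K)=\Nm_{K_2}(\fm^2)\cdot\Disc(K_2)^3$; it then shows $\fm\mid c$ in each case, invoking Lemma~\ref{cubic}(a) (Hasse's identity $\Disc(K)=\Disc(K_2)f^2$ with $f$ the conductor of $\widetilde{K}/K_2$) when $K$ is non-Galois, and combining the two tower formulas with part (b) when $K$ is cyclic. Your route instead splits cases up front: the compositum bound $\mathfrak{d}_{K_2K/K_2}\mid\Disc(K)\cO_{K_2}$ handles the cyclic case directly, and in the non-Galois case you invoke the conductor--discriminant formula for $S_3$ to get the exact value $|\Disc(\widetilde{K})|=|\Disc(K_2)|^3 f^4$. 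The underlying input in the non-Galois case is the same (Lemma~\ref{cubic}(a) is precisely the $S_3$ conductor--discriminant statement, attributed to Hasse), but your version makes the representation-theoretic source of the cancellation explicit, while the paper's unified conductor argument is a bit more streamlined and avoids the case split.
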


\begin{proof} There are two different ways we can calculate the discriminant of $K_2K$ using the field towers $K_2K/K_2/\bQ$ and $K_2K/K/\bQ$:
\begin{equation}\label{discformula}
\Nm_{K_2}(\Disc(K_2K/K_2))\cdot \Disc(K_2)^3 = \Disc(K_2K) = \Nm_{K}(\Disc(K_2K/K))\cdot \Disc(K)^2.\end{equation}

\noindent (a) \ 
By the second equality in \eqref{discformula}, we conclude that $\Disc(K)^2 \mid \Disc(K_2K).$

\noindent (b) \ Let $[\beta_1,\beta_2]$ denote an integral basis of $\cO_{K_2}$, and define the $2 \times 2$ matrix $M = [\sigma_i(\beta_j)]_{i,j}$ where $\sigma_i$ run through elements of $\Gal(K_2K/K)$. First, we know that $\det(M)^2 = \Disc(K_2)$, and second, we have that $[\beta_1,\beta_2]$ is a $K$-basis for $K_2K$. This implies that $\det(M)^2 \in \Disc(K_2K/K)$, and hence $$\Disc(K_2K/K) \mid \Disc(K_2)$$ as $\cO_K$-ideals. Part (b) then follows by taking norms.

\noindent (c) \ { The extension $K_2K/K_2$ is abelian, and by Theorem 9.2.6 of \cite{cohen}, it has an integral conductor $f$. It is related to the relative discriminant by $\Disc(K_2K/K_2) = (f\cO_{K_2})^2.$ 
If $K$ is non-Galois, then $\Disc(K) = df^2$ by Lemma \ref{cubic}(a), and so $f \mid c$. If $K$ is Galois, then $\Disc(K) = f^2$ where $f \mid c$.}

From \eqref{discformula}, we have that
	$$\Disc(K_2K) = \Nm_{K_2}(f^2)\cdot \Disc(K_2)^3.$$
Since $f \mid c$, we obtain  $\Disc(K_2K) \mid \Nm_{K_2}(c^2) \cdot \Disc(K_2)^3,$
which implies that
\begin{equation*}\label{lem} \Disc(K_2K) \mid c^4\cdot \Disc(K_2)^3.
\end{equation*}
\end{proof}

\begin{proposition}\label{prop1} Fix a quadratic field $K_2$. Any cubic cyclic extension $K_6$ over $K_2$ of conductor $c$ that is not Galois over $\bQ$ has a $c$-valid pair $(K^+,K^-)$ contained within the normal closure $\wt{K_6}$ satisfying $\widetilde{K^+K^-} = \wt{K_6}$. It is unique up to isomorphism. 
\end{proposition}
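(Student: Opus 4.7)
The plan is to invoke Lemma~\ref{lemma1} to produce the pair $(K^+,K^-)$ inside $\wt{K_6}$ and then verify that it is $c$-valid, with uniqueness up to isomorphism inherited directly from Lemma~\ref{lemma1}. Since $K_6/\bQ$ is not Galois, Lemma~\ref{lemma1} gives $\Gal(\wt{K_6}/\bQ)\cong S_3\times C_3$ together with a pair (unique up to isomorphism) consisting of a Galois cubic $K^+\subset\wt{K_6}$ fixed by the $S_3$ factor and a non-Galois cubic $K^-\subset\wt{K_6}$ fixed by a $C_2\times C_3$, satisfying $\widetilde{K^+K^-}=\wt{K_6}$. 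It remains to show that $\disc(K^+)\mid c^2$ and that $\disc(K^-)=\disc(K_2)\,f^2$ for some $f\mid c$.

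The essential containment is $\wt{K_6}\subset K_2(c)$. By hypothesis $K_6\subset K_2(c)$, and since $c\in\bZ$ the modulus $c\cO_{K_2}$ is $\Gal(K_2/\bQ)$-stable, so $K_2(c)/\bQ$ is itself Galois. Hence the $\iota$-conjugate $K_6':=\iota(K_6)$ also lies in $K_2(c)$; because $K_6/\bQ$ is not Galois, $K_6\neq K_6'$, and as two distinct cubic cyclic subextensions of $\wt{K_6}/K_2\cong C_3\times C_3$ they satisfy $K_6 K_6'=\wt{K_6}$. Therefore every cubic cyclic subextension of $\wt{K_6}/K_2$ has conductor dividing $c\cO_{K_2}$; in particular, $\wt{K^-}/K_2$ and $K^+K_2/K_2$ do. The $c$-validity of $K^-$ then follows immediately from Lemma~\ref{cubic}(a): $\disc(K^-)=\disc(K_2)\,f^2$, where $f\mid c$ is the positive integer representing the conductor of $\wt{K^-}/K_2$.

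The main obstacle is verifying the $c$-validity of $K^+$, namely $f_0\mid c$ where $f_0$ is the conductor of $K^+/\bQ$ (so $\disc(K^+)=f_0^2$ by Lemma~\ref{cubic}(b)). The plan is to compare local conductor exponents prime-by-prime. Fix $p\mid f_0$ and set $a_p:=v_p(f_0)$, which is $1$ for $p\neq 3$ and $2$ if $p=3$ by Lemma~\ref{cubic}(b); let $\fp$ be a prime of $K_2$ above $p$ with $e:=e(\fp/p)\in\{1,2\}$. Locally the character of $K^+K_2/K_2$ is $\chi\circ\Nm_{K_{2,\fp}/\bQ_p}$, where $\chi$ is the local character attached to $K^+/\bQ$ at $p$. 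A direct local class field theory computation (trivial in the $e=1$ case, immediate from tameness when $p\neq 3$, and carried out by an explicit norm calculation in $\bQ_3(\sqrt 3)/\bQ_3$ in the wild case $p=3$ with $a_p=2$ and $e=2$) yields the base-change conductor formula $v_{\fp}(\ff(K^+K_2/K_2))=e(a_p-1)+1$. Combining this with $v_{\fp}(\ff(K^+K_2/K_2))\le e\cdot v_p(c)$, which follows from $\ff(K^+K_2/K_2)\mid c\cO_{K_2}$, gives $a_p\le v_p(c)+(e-1)/e$; integrality then forces $a_p\le v_p(c)$ in every case. Multiplying over $p$ yields $f_0\mid c$, which completes the verification and hence the proposition.
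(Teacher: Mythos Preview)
Your proof is correct and follows a genuinely different route from the paper's, particularly for the $K^+$ half.

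Both arguments begin with Lemma~\ref{lemma1} to produce the candidate pair. For $K^-$, the paper routes the conclusion $f\mid c$ through the discriminant divisibilities of Lemma~\ref{tech}, whereas you obtain it immediately from your observation that $\wt{K_6}\subset K_2(c)$ (using that $c\cO_{K_2}$ is $\Gal(K_2/\bQ)$-stable, so $K_2(c)/\bQ$ is Galois). Your path here is shorter and avoids the delicate interplay between Lemma~\ref{tech}(c) and the not-yet-established $c$-validity.

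The real methodological difference is for $K^+$. The paper extracts the global bound $f_0^4\mid c^4\cdot\Disc(K_2)^3$ from Lemma~\ref{tech} and then argues prime-by-prime using near-squarefreeness of $\Disc(K_2)$, with a separate ramification argument at $p=2$. You instead compute the local base-change conductor $v_\fp(\ff(K^+K_2/K_2))=e(a_p-1)+1$ directly (which does check out in each case, including the wild case $p=3$, $e=2$), and then compare to $e\cdot v_p(c)$. Your approach is more uniform and makes the crucial bound at $p=3$ transparent: $3\le 2v_3(c)$ forces $v_3(c)\ge 2$. The paper's approach is more elementary in that it avoids explicit local norm computations, relying only on discriminant identities, at the cost of some case analysis. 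Either way, the content at $p=3$ is the same inequality in disguise.
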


\begin{proof}
Given a cubic cyclic extension $K_6$ over $K_2$ of conductor $c$, the candidate $c$-valid pair $(K^+,K^-)$ associated to $K_6$ comes from Lemma \ref{lemma1} and Figure 1. It remains to show that $\disc(K^+) \mid c^2$ and $\frac{\disc(K^-)}{\disc(K_2)} \mid c^2$. To do so, we study the relationship between total ramification in $K^+$ or $K^-$ and ramification in $K_6/K_2$. 

We begin with $K^-$. By Lemma \ref{cubic}(a), the conductor of $K^-K_2/K_2$ is equal to $f$ where $\Disc(K^-) = \Disc(K_2)f^2$. Combined with Lemma \ref{tech}, we obtain $\Disc(K_2)^2f^4 \mid c^4\Disc(K_2)^3$, and so
	$$f^4 \mid c^4\cdot \Disc(K_2).$$
Recall that $\Disc(K_2)$ is squarefree away from $2$, and $2^4 \nmid \Disc(K_2)$ for any quadratic field, so we conclude that $f \mid c$. 

We now turn to $K^+$. Lemma \ref{tech} implies that
	$$\Disc(K^+)^2 \mid c^4 \Disc(K_2)^3.$$
In this case, $\Disc(K^+) = f_0^2$ for some integer $f_0$, and so we obtain
	$$f_0^4 \mid c^4 \cdot \Disc(K_2)^3.$$
If $\Disc(K_2)$ is odd, then it is squarefree by Lemma \ref{cubic}(b). This implies $f_0 \mid c$. 

If $\Disc(K_2)$ is even, but $2 \nmid f_0$, then we also conclude $f_0 \mid c$. If $2 \mid f_0$, assume for the sake of contradiction that $2 \nmid c$. Then $\wt{K_6}$ is unramified over $K_2$ at the primes above $2$. This implies that the ramification degree of $\wt{K_6}/\bQ$ is at most $2$ for $p = 2$, and thus the ramification degree is at most $2$ in $K^+$. Since $K^+$ is cyclic of degree $3$, $2$ is thus unramified in $K^+$, which contradicts $2 \mid f_0$ since $\Disc(K^+) = f_0^2$. Thus, $2 \mid c$, and since $4 \nmid f_0$ by Lemma \ref{cubic}(b), we obtain $f_0 \mid c$. 
\end{proof}

\subsection{Proof of Theorem \ref{raypara}}

We finally return to Theorem \ref{raypara} and give a proof. We begin by giving an explicit description of the map. Let $c$ be as in the statement of Theorem \ref{raypara}. If $(K_2,G)$ is a quadratic field along with an index-$3$ subgroup $G$ of $\Cl(K_2,c)$, then let $K_6$ denote the fixed field in $K_2(c)$ for the subgroup $G$ so that $\Gal(K_6/K_2) = \Cl(K_2,c)/G$. We then have:
\begin{itemize}
\item If $K_6$ is Galois over $\bQ$ and $\Gal(K_6/\bQ) = C_6$, then $(K_2,G)$ corresponds to $(K_3,\bQ)$, where $K_3$ is the cubic subfield of $K_6$;
\item If $K_6$ is Galois over $\bQ$ and $\Gal(K_6/\bQ) = S_3$, then $(K_2,G)$ corresponds to $(\bQ,K_3)$, where $K_3$ is the cubic subfield of $K_6$;
\item If $K_6$ is not Galois over $\bQ$, then $(K_2,G)$ corresponds to $(K^+,K^-)$ as constructed in Lemma \ref{lemma1}.
\end{itemize}

If $K_6$ is Galois, then its cubic subfield $K_3$ can only totally ramify at primes dividing $c$. Indeed, this is clear when $\Gal(K_6/\bQ) = C_6$. When $\Gal(K_6/\bQ) = S_3$, a prime $p$ ramifies in the extension $K_6/K_2$ if and only if $p^2 \mid \frac{\Disc(K_3)}{\Disc(K_2)}$. Thus, $K_3$ is a $c$-valid cubic field in either case. 

When $K_6$ is not Galois, Proposition \ref{prop1} implies that the above map sends $(K_2,G)$ to a $c$-valid pair $(K^+,K^-)$. To prove the other direction, we begin with a $c$-valid pair $(K^+,K^-)$ and construct a cubic cyclic extension over $K_2$ of conductor dividing $c$. Then, the Galois group of $K_2(c)$ over this cubic cyclic extension will be equal to $G$. 

Recall that the compositum $K^+K_2K^-$ is Galois over $\bQ$ of degree $6$ or $18$. If it is degree $6$, then $(K^+,K^-)$ is in fact a non-trivial $c$-valid {\em cubic field}, i.e., exactly one of $K^\pm$ is equal to $\bQ$. Thus, we take $K_6$ to be the compositum $K^+K_2K^-$. In this case, it remains to show that $K_6$ has conductor dividing $c$ over $K_2$, i.e. $K_6 \subset K_2(c)$. If $K^- = \bQ$, then by assumption $\Disc(K^+) = f_0^2$ where $f_0 \mid c$; thus, $K^+ \subset \bQ(f_0) \subset \bQ(c)$, so $K_6 = K^+K_2 \subset K_2(c)$. If $K^+ = \bQ$, then $\Disc(K^-) = \Disc(K_2)f^2$ where $f \mid c$, and so $K^-K_2 = \wt{K^-}$ is a cubic extension of $K_2$ contained in $K_2(f) \subset K_2(c)$ by Lemma \ref{cubic}(a).   

If $K^+K_2K^-$ is degree $18$, it has Galois group equal to $S_3 \times C_3$, and we define $K_6$ to be the fixed field of any non-normal $C_3 \subset S_3 \times C_3$. It remains to show that $K_6$ has conductor dividing $c$ as an extension over $K_2$. We do so by proving that $K_2K^-$ and $K_2K^+$ have conductor dividing $c$ over $K_2$. 

Lemma \ref{cubic}(a) implies that $K_2K^-$ has conductor $f$ where $\Disc(K^-) = \Disc(K_2)\cdot f^2$, and $f \mid c$.  Additionally, if $\Disc(K^+) = f_0^2$, suppose $p$ is a prime such that $p \nmid f_0$. Then $p$ cannot ramify in $K^+$, which implies that $K^+K_2/K_2$ is unramified above $p$. By Lemma \ref{conductor}, we conclude that $K_2K^+/K_2$ has conductor dividing
	$$\begin{cases} \ds\prod_{p \mid f_0} p & \mbox{ if }3 \nmid f_0, \mbox{ or }\vspace{.1in}\\ \ds9\cdot\prod_{3 \neq p \mid f_0} p & \mbox{ if } 3 \mid f_0. \end{cases}$$
If $3 \nmid f_0$, then $f_0$ is squarefree by Lemma \ref{cubic}(b), and so the conductor of $K_2K^+$ over $K_2$ divides $c$ since $f_0 \mid c$. If $3 \mid f_0$, note that $9 \mid\mid f_0$; thus, we altogether obtain that $K_2K^+$ and $K_2K^-$ both have conductor dividing $c$. Since $\wt{K_6} = K^+K_2K^-$, it must have conductor dividing $c$ as an extension over $K_2$, and so $K_6$ as a subextension must also have conductor dividing $c$. 

It is easy to check that two non-isomorphic $c$-valid pairs $(K^+,K^-)$ correspond to distinct non-isomorphic cubic cyclic extensions of $K_2$ of conductor $c$, and thus, they correspond to distinct index $3$-subgroups of $\Cl(K_2,c)$. \hfill $\square$
\\

Using the fact that the number of order-$3$ subgroups is equal to the number of index-$3$ subgroups in a finite abelian group, we directly relate the number of non-trivial $c$-valid pairs to the number of $3$-torsion elements in ray class groups of conductor $c$. 

\begin{corollary}\label{6.1} If $c$ is a positive integer which is squarefree away from 3 and $27 \nmid c$, then
$$\#\Cl_3(K_2,c) = 2 \cdot \#\left\{\begin{array}{c}\mbox{\rm non-trivial $c$-valid pairs of fields $(K^+,K^-)$}\\ \mbox{\rm s.t.~$K^- = \bQ$ or has quadratic resolvent $K_2$ }\end{array}\right\} + 1.$$
\end{corollary}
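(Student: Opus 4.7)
The plan is to combine the bijection of Theorem \ref{raypara} with the elementary identity that, in any finite abelian group $A$, the number of index-$3$ subgroups equals the number of cyclic order-$3$ subgroups (by Pontryagin duality), and each order-$3$ subgroup contributes exactly two non-identity elements to $A[3]$. Applied with $A = \Cl(K_2,c)$ this gives
\[
\#\Cl_3(K_2,c) \;=\; 1 \;+\; 2\cdot\#\bigl\{\text{index-}3\text{ subgroups of }\Cl(K_2,c)\bigr\},
\]
which already accounts for the $+1$ and the factor of $2$ on the right-hand side of the corollary.

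The remaining task is to express the number of index-$3$ subgroups of $\Cl(K_2,c)$ as the cardinality of the set of non-trivial $c$-valid pairs described in the corollary. For this I would invoke Theorem \ref{raypara}, which puts the collection of pairs $(K_2,G)$ (with $K_2$ varying over quadratic fields and $G$ ranging over index-$3$ subgroups of $\Cl(K_2,c)$) into bijection with isomorphism classes of non-trivial $c$-valid pairs $(K^+,K^-)$. Specializing to the fiber above our fixed $K_2$ amounts to reading off the three cases of that theorem's proof: when $\Gal(K_6/\bQ)\cong C_6$, the pair is $(K^+,\bQ)$ with $K^+$ a cyclic cubic, and by Lemma \ref{cubic}(b) one has $K^+\subset\bQ(c)\subset K_2(c)$, so $K^+$ is compatible with every quadratic $K_2$; when $\Gal(K_6/\bQ)\cong S_3$, the pair is $(\bQ,K^-)$ with $\wt{K^-}=K^-K_2$, which forces $K_2$ to equal the quadratic resolvent $\bQ(\sqrt{\Disc K^-})$; and when $K_6$ is non-Galois, the same identification of $K_2$ as the quadratic resolvent of $K^-$ is again forced by the inclusion $\wt{K^-}\subset\wt{K_6}$. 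Together these three families form precisely the set singled out by the corollary's selection condition --- those non-trivial $c$-valid pairs whose non-Galois cubic component is either trivial or has quadratic resolvent equal to $K_2$.

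The main obstacle is not substantive but organizational: one must verify that Theorem \ref{raypara} restricts cleanly to a bijection between index-$3$ subgroups of the \emph{specific} $\Cl(K_2,c)$ and this enumerated set of pairs, rather than merely to a matching of totals as $K_2$ varies. This reduces to the uniqueness asserted in Lemma \ref{lemma1} (distinct $c$-valid pairs yield distinct sextic extensions $\wt{K_6}$) together with the observation --- implicit in Proposition \ref{prop1} --- that whenever the non-Galois cubic component of a pair is non-trivial, the quadratic field $K_2$ is uniquely recovered from it as its quadratic resolvent, so no pair lies above two distinct quadratic fields. Substituting this identification into the group-theoretic count above then produces the formula stated in the corollary.
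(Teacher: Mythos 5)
Your argument is correct and takes essentially the same route as the paper, which likewise deduces the corollary directly from Theorem \ref{raypara} together with the identity $\#A[3] = 1 + 2\cdot\#\{\text{index-}3\text{ subgroups of }A\}$ for a finite abelian group $A$. Your final paragraph, pinning down the fiber of the bijection over a fixed $K_2$ via the quadratic resolvent of the non-Galois component, merely makes explicit what the paper leaves implicit (and correctly reads the selection condition, whose displayed form says ``$K^+$'' where ``$K^-$'' is meant).
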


Recall that the {\em quadratic resolvent} of a non-Galois cubic field $K_3$ is the quadratic subfield of the normal closure $\wt{K_3}$. If $K_3$ has quadratic resolvent $K_2$, then $\Disc(K_2)\mid \Disc(K_3)$ and $\frac{\Disc(K_3)}{\Disc(K_2)}$ is equal to the square of the conductor of $\Gal(\wt{K_3}/K_2)$ by Lemma \ref{cubic}(b).

\subsection{The action of $\Gal(K_2/\bQ)$ on $\Cl_3(K_2,c)$}
We next consider the action of $\Gal(K_2/\bQ)$ on $\Cl_3(K_2,c)$. The number of $c$-valid {\em cubic fields} on the right-hand side of the equality in Corollary \ref{6.1} is related to the sizes of eigenspaces for the action of $\Gal(K_2/\bQ)$. Note that $\Cl_3(K_2,c)$ is a $\Gal(K_2/\bQ)$-module of odd order, and thus we have two well-defined submodules of $\Cl_3(K_2,c)$:
	\begin{eqnarray*}
	\Cl_3^+(K_2,c) & := &\{ [I] \in \Cl_3(K_2,c) : \sigma(I) = I\},\mbox{ and }  \\
	 \Cl_3^-(K_2,c)  & := & \{ [I] \in \Cl_3(K_2,c) : \sigma(I) = J \mbox{ where } [I]^{-1} = [J]\}.
	\end{eqnarray*}
We then have $\Cl_3({K_2,c}) = \Cl_3^+({K_2,c}) \oplus \Cl_3^-({K_2,c}),$ and thus
	\begin{equation}\label{pmdecomp}\#\Cl_3({K_2,c}) = \#\Cl_3^+({K_2,c}) \cdot\#\Cl^-_3({K_2,c}).\end{equation}
\begin{proposition}\label{decomp} Fix a quadratic field $K_2$, and let $c$ be a positive integer which is squarefree away from 3, and $27 \nmid c$. Then:
\begin{eqnarray*}
	(\rm{a}) \ \ \#\Cl_3^+(K_2,c) &=& 2\cdot \#\left\{\mbox{\rm Cyclic $c$-valid cubic fields $K^+$}\right\}+1;\\
	({\rm b}) \ \ \#\Cl_3^-(K_2,c) &=& 2\cdot \#\left\{\mbox{\rm Non-cyclic $c$-valid cubic fields $K^-$ with quadratic resolvent $K_2$}\right\}+1.
\end{eqnarray*}
\end{proposition}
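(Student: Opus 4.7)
The plan is to combine the correspondence of Theorem~\ref{raypara} with the $\sigma$-eigenspace decomposition of $\Cl_3(K_2,c)$, where $\sigma$ denotes the nontrivial element of $\Gal(K_2/\bQ)$. Fix an index-$3$ subgroup $G \subset \Cl(K_2,c)$ and let $K_6 = K_2(c)^G$, so that $\Gal(K_6/K_2) \cong \Cl(K_2,c)/G \cong \bZ/3$. Since $\Aut(\bZ/3) = \{\pm 1\}$, the induced $\sigma$-action on $\Gal(K_6/K_2)$—when defined, i.e.~when $G$ is $\sigma$-stable—is either trivial or inversion. I would first argue that these three possibilities (trivial, inversion, $G$ not $\sigma$-stable) correspond respectively to $\Gal(K_6/\bQ) = C_6$, $\Gal(K_6/\bQ) = S_3$, and $K_6/\bQ$ non-Galois (falling under the $S_3\times C_3$ case of Lemma~\ref{lemma1}). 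Tracing through the explicit construction in the proof of Theorem~\ref{raypara} then identifies these three cases with $c$-valid pairs of the form $(K^+,\bQ)$ with $K^+$ cyclic cubic, $(\bQ,K^-)$ with $K^-$ non-cyclic and quadratic resolvent $K_2$, and $(K^+,K^-)$ with both entries non-trivial.

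Next, I would count the first two types of index-$3$ subgroups using class field theory duality. The index-$3$ subgroups of $\Cl(K_2,c)$ are in bijection with $\bF_3^\times$-orbits of non-zero characters $\phi \in \Hom(\Cl_3(K_2,c),\bF_3)$. Since $\Cl_3(K_2,c)$ is an odd-order $\Gal(K_2/\bQ)$-module, it decomposes as $\Cl_3^+\oplus\Cl_3^-$, and dually $\Hom(\Cl_3(K_2,c),\bF_3) = \Hom(\Cl_3^+,\bF_3)\oplus \Hom(\Cl_3^-,\bF_3)$. A character $\phi$ is $\sigma$-invariant (resp.\ $\sigma$-anti-invariant) precisely when it factors through $\Cl_3^+$ (resp.~$\Cl_3^-$); and given $G = \ker\phi$, such invariance/anti-invariance of $\phi$ is equivalent to $\sigma$ acting trivially/by inversion on $\Cl(K_2,c)/G$.

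Combining the two steps: the cyclic $c$-valid cubic fields $K^+$ are in bijection with $\bF_3^\times$-orbits of non-zero $\sigma$-invariant characters, of which there are $(\#\Cl_3^+-1)/2$; the non-cyclic $c$-valid cubic fields $K^-$ with quadratic resolvent $K_2$ are in bijection with $\bF_3^\times$-orbits of non-zero $\sigma$-anti-invariant characters, of which there are $(\#\Cl_3^--1)/2$. Rearranging yields both (a) and (b).

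The main technical point is the Galois-theoretic matching in the first step: confirming that the $\sigma$-action on $\Gal(K_6/K_2)$ being trivial or inversion corresponds exactly to $\Gal(K_6/\bQ)$ being $C_6$ or $S_3$, and that these cases match up via Theorem~\ref{raypara} with precisely the $c$-valid pairs having one trivial entry. Once this bookkeeping is done, the counting reduces to elementary linear algebra over $\bF_3$.
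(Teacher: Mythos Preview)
Your proposal is correct and, for part (a), follows essentially the same line as the paper: both arguments use Artin reciprocity to translate the $\sigma$-action on $\Cl(K_2,c)/G$ into the dichotomy $\Gal(K_6/\bQ)\cong C_6$ versus $S_3$, and then count. The paper works directly with the subgroup $H$ and its eigenspace decomposition $H=H^+\oplus H^-$, while you phrase the same computation in the dual language of characters $\phi:\Cl(K_2,c)\to\bF_3$; this is a cosmetic difference.

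Where your proposal genuinely adds something is part (b). The paper does not prove (b) but simply cites \cite[Lemma~1.10]{nakagawa} and \cite[Proposition~35]{bv3}. Your character-theoretic framework handles (a) and (b) symmetrically in a single stroke, so your write-up is more self-contained. One small imprecision worth fixing: index-$3$ subgroups of $\Cl(K_2,c)$ correspond to nonzero elements of $\Hom(\Cl(K_2,c),\bF_3)$, which factors through $\Cl(K_2,c)/3\Cl(K_2,c)$ rather than through $\Cl_3(K_2,c)=\Cl(K_2,c)[3]$. This does not affect the count, since for the finite abelian $3$-Sylow $B$ with its $\sigma$-decomposition $B=B^+\oplus B^-$ one has $\lvert B^\pm[3]\rvert=\lvert B^\pm/3B^\pm\rvert$; but it is worth stating cleanly.
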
	
\begin{proof}
The second part follows from Lemma 1.10 of \cite{nakagawa} and Proposition 35 of \cite{bv3}. To prove the first part, consider some cyclic cubic field $K^+$ unramified away from $c$. By class field theory and the proof of Proposition \ref{raypara}, $K^+K_2/K_2$ corresponds to a index-$3$ subgroup $H$ of $\Cl(K_2,c)^{(3)}$, the 3-Sylow subgroup of $\Cl(K_2,c)$. Since $K^+K_2$ is Galois over $\bQ$, $H$ has an action of $\Gal(K_2/\bQ)$. Artin reciprocity implies that 
	$$\sigma(K^+K_2) = K^+K_2 \quad \Rightarrow \quad \sigma(H) = H.$$ Thus, we write $H = H^+ \oplus H^-$, where $H^{\pm} := \{ [I] \in H : \sigma([I]) = [I]^{\pm}\}$. Let $\Cl^{\pm}(K_2,c)^{(3)}$ be defined analogously. Since $H$ is index $3$, it is clear that $H^+ = \Cl^+(K_2,c)^{(3)}$ or $H^- =\Cl^-(K_2,c)^{(3)}. $

We now show that $H^- = \Cl^-(K_2,c)^{(3)}$, so that $\Cl(K_2,c)^{(3)}/H \cong \Cl^+(K_2,c)^{(3)}/H^+$. For any lift $\tilde{\sigma}$ of $\sigma$ to $\Gal(K^+K_2/K_2)$, Artin reciprocity implies the action of conjugation on $\Gal(K^+K_2/K_2)$ by $\tilde{\sigma}$ corresponds to acting by $\sigma$ on $\Cl(K_2,c)^{(3)}/H$. Since $\Gal(K^+K_2/K_2)$ is isomorphic to $C_6$, $\sigma$ acts trivially on $\Cl(K_2,c)^{(3)}/H$ so $H^- = \Cl^-(K_2,c)^{(3)}$. We then have that the number of index-$3$ subgroups of $\Cl^+(K_2,c)^{(3)}$ is the same as the number of order-$3$ subgroups, which are generated by nontrivial elements of $\Cl_3^+(K_2,c)$. Since powers of an element generate the same subgroup we then deduce the first part.
\end{proof}

We additionally remark that for any quadratic field $K_2$, $\Cl^+_3(K_2,c) = \Cl_3(\bQ,c)$, independent of $K_2$. This is a crucial fact that greatly simplifies the computation for the average size of ray class groups of conductor $c$ when $K_2$ is allowed to vary. We next compute asymptotics for both $\Cl^{\pm}_3(K_2,c)$ by counting the relevant $c$-valid cubic fields as given in Proposition \ref{decomp}.

\section{Counting $c$-valid cubic fields}
The results of the previous section allow us to determine the number of $3$-torsion elements in ray class groups of quadratic fields simply in terms of $c$-valid cubic fields instead of $c$-valid pairs. We first compute the size of $\Cl_3^+(K_2,c)$ for any quadratic field $K_2$ by enumerating the number of cyclic $c$-valid cubic fields.
In order to obtain asymptotics for the size of $\Cl_3^-(K_2,c)$, we then employ the results of \cite{simple} building on those of \cite{dh} for computing the number of cubic fields with bounded discriminant that satisfy certain ramification restrictions. 

\subsection{The size of the $3$-torsion subgroup in ray class groups of $\bQ$} \label{caseg}

As before, let $K_2$ be a quadratic field. In this section, we prove that the number of $\Gal(K_2/\bQ)$-stable elements in the $3$-torsion subgroups of the ray class group of conductor $c$ depends only on the number of distinct primes congruent to $1\bmod{3}$ that divide $c$. More precisely,
\begin{proposition}\label{h+} Let $K_2$ be a quadratic field, and let $c$ be a positive integer. Let the number of distinct prime factors $p_i \mid c$ such that $p_i \equiv 1 \mod 3$ be denoted by $m$. Then $$\#\Cl_{3}^+(K_2,c) = \begin{cases} 3^{m} &\mbox{ if } 9 \nmid c, \mbox{ and } \\ 3^{m+1} & \mbox{ if } 9 \mid c,\end{cases} $$
independent of the quadratic field $K_2$.
\end{proposition}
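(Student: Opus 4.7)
The plan is to reduce everything to a direct computation of $\Cl_3(\bQ, c)$ and then invoke the remark preceding the proposition, which asserts $\Cl_3^+(K_2,c) = \Cl_3(\bQ,c)$. This identification immediately gives the independence of $K_2$, so the only real content is the explicit formula for $\#\Cl_3(\bQ,c)$.

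First I would write down an explicit description of $\Cl(\bQ,c)$. Using the definition with $\cI_c(\bZ)$ modulo $\cP_{1,c}(\bZ)$, and noting that every nonzero ideal of $\bZ$ has generators $\pm a$ for a unique $a\in\bZ_{>0}$, one checks that $(a)$ and $(b)$ (with $a,b$ positive and coprime to $c$) represent the same ray class if and only if $a\equiv \pm b\bmod c$. Hence
\[
\Cl(\bQ,c) \;\cong\; (\bZ/c\bZ)^\times / \{\pm 1\}.
\]
Since $\{\pm 1\}$ has order prime to $3$, taking $3$-torsion commutes with this quotient, so
\[
\Cl_3(\bQ,c) \;\cong\; (\bZ/c\bZ)^\times[3].
\]

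Next I would decompose by CRT, writing $c = \prod_p p^{e_p}$, to get $(\bZ/c\bZ)^\times[3]=\prod_p (\bZ/p^{e_p}\bZ)^\times[3]$, and compute each local factor. The structure of $(\bZ/p^{e}\bZ)^\times$ is standard: cyclic of order $p^{e-1}(p-1)$ for odd $p$, and of $2$-power order for $p=2$. So the $3$-torsion is trivial for $p=2$ and for $p\equiv 2\bmod 3$; it is cyclic of order $3$ for each $p\equiv 1\bmod 3$ dividing $c$; and for $p=3$ it is trivial when $e_3=1$ and cyclic of order $3$ when $e_3\ge 2$.

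Finally I would tally these contributions against the definition of $m$. If $3\nmid c$, then $m$ counts primes $p\equiv 1\bmod 3$ dividing $c$, each contributing a factor of $3$, giving $3^m$. If $9\mid c$, then $m$ counts those primes plus the prime $3$ itself, and the $p=3$ factor also contributes $3$, giving $3^m$. If $3\|c$ (i.e.\ $c\equiv 3,6\bmod 9$), then $m$ again counts primes $p\equiv 1\bmod 3$ plus the prime $3$, but the $p=3$ factor is now trivial, so we get $3^{m-1}$. This matches the two cases in the statement.

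There is no serious obstacle here; the only subtlety is being careful about the $p=3$ bookkeeping (distinguishing $e_3=0$, $e_3=1$, $e_3\ge 2$) and about the role of $\{\pm 1\}$ in the adelic/idelic description of $\Cl(\bQ,c)$, which is harmless because it is of order coprime to $3$.
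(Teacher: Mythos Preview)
Your argument is correct. You invoke the identification $\Cl_3^+(K_2,c)=\Cl_3(\bQ,c)$ stated just before the proposition and then compute $\Cl_3(\bQ,c)$ directly as the $3$-torsion of $(\bZ/c\bZ)^\times/\{\pm1\}\cong(\bZ/c\bZ)^\times[3]$, finishing with CRT and the standard structure of $(\bZ/p^e\bZ)^\times$. The case analysis for $p=3$ and the bookkeeping for $m$ are handled correctly.

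This is a genuinely different route from the paper. The paper does \emph{not} use the identification with $\Cl_3(\bQ,c)$ for the computation; instead it applies Proposition~\ref{decomp}(a) to rewrite $\#\Cl_3^+(K_2,c)$ as $1+2\cdot\#\{\text{cyclic $c$-valid cubic fields}\}$, then invokes Lemma~\ref{cubic}(b) and Cohn's count of cyclic cubic fields of a given conductor, summing $\sum_j\binom{m}{j}2^{j-1}$ (and its variants) to reach $3^m$ or $3^{m\pm1}$. Your approach is shorter and more elementary, bypassing the field-counting entirely; the paper's approach has the virtue of staying within the ``count cubic fields'' framework that drives the rest of the article and makes the independence from $K_2$ visible through Proposition~\ref{decomp}(a) rather than through the unproved remark. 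Either way the answers agree.
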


\begin{proof}
By Proposition \ref{decomp}(a), we can enumerate elements of $\Cl_3^+(K_2,c)$ by counting cyclic $c$-valid cubic fields, i.e., normal cubic extensions of $\bQ$ with discriminant dividing $c^2$. If $K^+$ is such a cyclic field of degree 3, by Lemma \ref{cubic}(b), the conductor of $K^+$ is equal to $c_0 = 3^e\cdot p_1\cdot \hdots \cdot p_m$ where $e = 0$ or $2$, and $p_i$ denotes distinct primes satisfying $p_i \equiv 1 \bmod{3}$ for all $i$. Furthermore, if $e = 0$, then there are $2^{m-1}$ cubic cyclic fields of conductor $c_0$, and if $e = 2$, then there are $2^m$ cubic cyclic fields of conductor $c_0$ (see \cite{cohn}). We must therefore enumerate cyclic $c$-valid cubic fields with discriminant $c_0^2$ where $c_0$ is as above.
 
If $9\nmid c$, let ${c} = 3^e\cdot p_1^{k_1}\cdot \hdots \cdot p_{m}^{k_m}\cdot q_{m+1}^{k_{m+1}}\cdot \hdots\cdot q_n^{k_n}$ where each $p_i$ is a distinct prime congruent to $1 \bmod{3}$, $q_j$ are distinct primes congruent to $2 \bmod{3}$, and $e = 0$ or $1$. 
In conjunction with Proposition \ref{decomp}(a), we obtain $$\#\Cl_{3}^+(K_2,c) = 1 + 2\cdot\left(\sum_{j=1}^m \binom{m}{j}2^{j-1}\right) = 3^m.$$ 
Similarly, if $e \geq 2$, we deduce
	$$\#\Cl_3^+(K_2,c) = 1 + 2\cdot\left(\sum_{j=1}^{m+1} \binom{m+1}{j} 2^j\right) = 3^{m+1}.$$
\end{proof}

\subsection{The asymptotic number of non-cyclic $c$-valid cubic fields}

We want to next determine the asymptotics for the number of cubic fields that are totally ramified at a certain fixed set of primes. Let $\cK^{\rm full}$ denote the set of isomorphism classes of cubic fields, and for any subset $\cK \subseteq \cK^{\rm full}$, define for $i = 0$ or $1$: 
	$$N_3^{(i)}(\cK,X) :=  \#\{K_3 \in \cK \mid 0 < (-1)^i\Disc(K_3) < X\}.$$ 
\begin{theorem} \label{simple} Let $S$ denote a set of primes, and let $n_i = \#\Aut(\bR^{3-2i} \oplus \bC^{i})$ for $i = 0$ or $1$.  
\begin{enumerate}
\item[\rm (a)]  Let $\cK_S$ denote the set of isomorphism classes of cubic fields that are totally ramified exactly at the primes $p \in S$.
$$\lim_{X \rightarrow \infty} \frac{\displaystyle N_3^{(i)}(\cK_S,X)}{X} = \frac{1}{n_i} \cdot \frac{1}{2\cdot\zeta(2)} \cdot \prod_{p\in S} \frac{1}{p(p+1)}$$
\item[\rm (b)] If $3 \in S$, let $\cK^{(3)}_S$ denote the set of isomorphism classes of cubic fields that are totally ramified exactly at $p \in S$ and have discriminant that is {\em not} divisible by 81. 
$$\lim_{X \rightarrow \infty} \frac{\displaystyle N_3^{(i)}(\cK_S^{(3)},X)}{X} = \frac{1}{n_i} \cdot \frac{1}{3\cdot\zeta(2)} \cdot \prod_{p\in S} \frac{1}{p(p+1)}$$
\item[\rm (c)] If $3 \in S$, let $\cK^{(9)}_S$ denote the set of isomorphism classes of cubic fields that are totally ramified exactly at $p \in S$ and have discriminant divisible by 81.
$$\lim_{X\rightarrow\infty}\frac{\displaystyle N_3^{(i)}(\cK^{(9)}_S,X)}{X} = \frac{1}{n_i} \cdot \frac{1}{6\cdot\zeta(2)} \cdot \prod_{p\in S} \frac{1}{p(p+1)}.$$ 
\item[\rm (d)] Let $S'$ be a set of primes containing $S$. Let $\cK_{S,S'}$ denote the set of isomorphism classes of cubic fields that are totally ramified exactly at $p \in S$ and unramified at $p \in S' \smallsetminus S$. 
$$\lim_{X \rightarrow \infty}\frac{\displaystyle N_3^{(i)}(\cK_{S,S'},X)}{X} = \frac{1}{n_i} \cdot \frac{1}{2\cdot\zeta(2)} \cdot \prod_{p \in S' \smallsetminus S} \frac{p}{p+1} \cdot \prod_{p\in S} \frac{1}{p(p+1)}.$$
\item[\rm (e)] Let $S'$ be a set of primes containing $S$. If $3 \in S$, let $\cK^{(9)}_{S,S'}$ denote the set of isomorphism classes of cubic fields $K_3$ that are totally ramified exactly at $p \in S$, unramified at $p \in S' \smallsetminus S$, and $81 \mid \mid \disc(K_3)$. 
$$\lim_{X\rightarrow \infty}\frac{\displaystyle N_3^{(i)}(\cK_{S,S'}^{(9)},X)}{X} = \frac{1}{n_i} \cdot \frac{1}{9\cdot\zeta(2)} \cdot \prod_{p \in S'\smallsetminus S} \frac{p}{p+1} \cdot \prod_{p\in S} \frac{1}{p(p+1)}.$$
\end{enumerate} 
\end{theorem}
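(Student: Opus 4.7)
The proof is a direct application of the refined Davenport-Heilbronn theorem from \cite{simple}, which for any \emph{acceptable} collection $\Sigma = (\Sigma_p)_p$ of local conditions on \'etale cubic $\bQ_p$-algebras (acceptability meaning that at all but finitely many primes $p$, the condition $\Sigma_p$ admits all but a density-zero subset of such algebras) produces an asymptotic formula for the number of cubic fields with bounded discriminant satisfying $\Sigma$, in terms of explicit local densities. My plan is to express each of the five sets $\cK_\bullet$ as such a local condition, compute the corresponding local densities, and assemble the Euler products.

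For each part, the local conditions I would impose are: at every $p\in S$, total ramification; at $p\in S_0\smallsetminus S$ (in parts (d) and (e)), unramified; at primes outside $S\cup S_0$, ``not totally ramified'' in parts (a)--(c) and ``no condition'' in parts (d)--(e); and (when $3\in S$ and in parts (b), (c), (e)) a further refinement at $p=3$ by the $3$-adic valuation of the discriminant. The local densities at primes $p\neq 3$ are standard: the Bhargava-Shankar-Tsimerman parametrization of cubic rings gives $\tfrac{1}{p(p+1)}$ for the totally ramified class and $\tfrac{p}{p+1}$ for the unramified classes, accounting (together with the partially ramified class) for the factors that appear multiplicatively in every line of the theorem.

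At $p=3$, a direct enumeration of cubic extensions of $\bQ_3$ combined with Bhargava's weighting by $|\mathrm{Aut}|^{-1}$ and discriminantal normalization shows that among the totally ramified \'etale $\bQ_3$-algebras, the mass splits in the ratio $2:1$ between those with $81\nmid\disc$ and those with $81\mid\disc$; this yields the numerators $3,2,1$ in parts (a)--(c) and the $\tfrac{2}{3}$ prefactor in part (e) for the refined condition $\disc\mid\mid 81$. The global constant $\tfrac{3}{n_i\pi^2}$ in parts (a), (d) then emerges by combining the unconditional Davenport-Heilbronn density with the Euler product encoding the ``not totally ramified'' condition at all primes.

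The main obstacle I anticipate is the analysis at $p=3$: one must verify that the refinement by $v_3(\disc)$ is still an acceptable local condition in the sense of \cite{simple}, and execute the finite enumeration of cubic extensions of $\bQ_3$ with care to track wild ramification. Once those local masses are established, matching them against the stated constants is a routine Euler-product calculation carried out in \cite{simple}.
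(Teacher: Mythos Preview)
Your approach is essentially the paper's: impose local specifications $(\Sigma_p)_p$ describing each family $\cK_\bullet$, verify acceptability, apply the Bhargava--Shankar--Tsimerman mass formula (Theorem~7 of \cite{simple}), and read off the Euler product using the local masses computed in \cite{simple}. The $2:1$ split at $p=3$ you describe is exactly what the paper records as the masses $\tfrac{2}{27}$ and $\tfrac{1}{27}$ for the sets $\Sigma_3^{(3)}$ and $\Sigma_3^{(9)}$, and the further refinement $\Sigma_3^{(81)}$ (mass $\tfrac{2}{81}$) accounts for the $\tfrac{2}{3}$ in part~(e).

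There is one slip that would give the wrong constant if carried through literally. In parts (d) and (e) you propose imposing \emph{no condition} at primes $p\notin S_0$. But $\cK_{S,S_0}$ is defined to consist of cubic fields totally ramified \emph{exactly} at the primes in $S$, so at every prime outside $S$---including those outside $S_0$---the local algebra must be \emph{not totally ramified}, just as in parts (a)--(c). Concretely, the paper takes $\Sigma_p=\Sigma_p^{\mathrm{ntr}}$ for $p\notin S_0$, $\Sigma_p=\Sigma_p^{\mathrm{ur}}$ for $p\in S_0\smallsetminus S$, and $\Sigma_p=\Sigma_p^{\mathrm{tr}}$ for $p\in S$. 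If you instead allowed all maximal local algebras at $p\notin S_0$, the local mass there would be $\tfrac{p+1}{p}+\tfrac{1}{p^2}$ rather than $\tfrac{p+1}{p}$, and the resulting Euler product would not equal $\tfrac{3}{n_i\pi^2}\prod_{p\in S_0\smallsetminus S}\tfrac{p}{p+1}\prod_{p\in S}\tfrac{1}{p(p+1)}$; in particular, setting $S_0=S$ would no longer recover part~(a). Once you correct this one local specification, your argument coincides with the paper's.
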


\begin{proof} For any prime $p$, let $\Sigma_p^{\rm tr}$ denote the set of all isomorphism classes of maximal cubic \'etale algebras over $\bZ_p$ that are totally ramified. Let $\Sigma_p^{\rm ur}$ denote the set of all isomorphism classes of maximal cubic \'etale algebras over $\bZ_p$ that are unramified. Additionally, let $\Sigma^{\rm ntr}_p$ denote the set of all isomorphism classes of maximal cubic \'etale algebras over $\bZ_p$ that are not totally ramified. When $p = 3$, let $\Sigma_3^{(3)}$ denote the subset of $\Sigma_3^{\rm tr}$ whose discriminant over $\bZ_3$ is not divisible by $81$, and let $\Sigma_3^{(9)}$ denote the subset of $\Sigma_3^{\rm tr}$ whose discriminant is divisible by $81$. Finally, let $\Sigma_3^{(81)}$ denote the subset of $\Sigma_3^{\rm tr}$ whose discriminant is equal to $81$. 

We now describe collections $\Sigma = (\Sigma_p)_p$ of local specifications at each prime $p$ which exactly determine the (maximal orders of) cubic fields contained in $\cK_{S}$, $\cK_{S}^{(3)}$, $\cK_S^{(9)}$, $\cK_{S,S'}$, and $\cK_{S,S'}^{(9)},$ respectively. In each case, $\Sigma$ is an {\em acceptable} collection of local specifications as defined in \cite{simple}. Indeed, we can equivalently define the family of cubic fields (a) $\cK_{S}$, (b) $\cK_{S}^{(3)}$, (c) $\cK_S^{(9)}$, (d) $\cK_{S,S'}$, or (e) $\cK_{S,S'}^{(9)}$ as containing exactly the fraction fields of all maximal orders $R$ for which $R \otimes \bZ_p \in \Sigma_p$ for all $p$ where: 
\begin{multicols}{2}
\begin{enumerate}
\item[\rm (a)] $\Sigma_p = \begin{cases} \Sigma^{\rm ntr}_p & \mbox{ if } p \notin S, \\
 \Sigma_p^{\rm tr} & \mbox{ if } p \in S;
\end{cases}$
\item[\rm (b)] $\Sigma_p = \begin{cases}  \Sigma^{\rm ntr}_p & \mbox{ if } p \notin S, \\
 \Sigma_p^{\rm tr} & \mbox{ if } p \in S \smallsetminus\{3\}, \\
 \Sigma_3^{(3)} & \mbox{ if } p = 3;
\end{cases}$
\item[\rm (c)] $ \Sigma_p =\begin{cases}  \Sigma^{\rm ntr}_p & \mbox{ if } p \notin S, \\
  \Sigma_p^{\rm tr} & \mbox{ if } p \in S \smallsetminus\{3\}, \\
 \Sigma_3^{(9)} & \mbox{ if } p = 3;
\end{cases}$
\item[\rm (d)] 
$\Sigma_p = \begin{cases}   \Sigma^{\rm ntr}_p & \mbox{ if } p \notin S_0, \\
\Sigma_p^{\rm tr} & \mbox{ if } p \in S, \\
 \Sigma_p^{\rm ur} & \mbox{ if } p \in S' \smallsetminus S;
\end{cases}$
\item[\rm (e)]
$\Sigma_p = \begin{cases}   \Sigma^{\rm ntr}_p & \mbox{ if } p \notin S_0, \\
  \Sigma_p^{\rm tr} & \mbox{ if } p \in S \smallsetminus \{3\},\\
  \Sigma_p^{\rm ur} & \mbox{ if } p \in S' \smallsetminus S, \\
 \Sigma_3^{(81)} & \mbox{ if } p = 3.
\end{cases}$
\end{enumerate}
\end{multicols}

Let $N_3^{(i)}(\Sigma,X)$, the number of (isomorphism classes) of maximal cubic rings $R$ such that $R \otimes \bZ_p \in \Sigma_p$ for all $p$ with $0 < (-1)^i \Disc(R) < X$. We asymptotically compute $N_3^{(i)}(\Sigma,X)$ using Theorem 7 in \cite{simple}, which determines the main term in terms of a mass formula whenever $\Sigma$ is defined by an acceptable collection of local conditions. More precisely, they prove
\begin{eqnarray}\label{mass}
\lim_{X\rightarrow \infty} \frac{N_3^{(i)}(\Sigma,X)}{X} &=& \frac{1}{2n_i}\cdot\prod_p\left(\frac{p-1}{p}\cdot\sum_{R \in \Sigma_p} \frac{1}{\Disc_p(R)}\cdot\frac{1}{\#\Aut(R)}\right),
\end{eqnarray}
where $\Disc_p(R)$ denotes the discriminant of $R$ over $\bZ_p$ as a power of $p$. We compute (or combine Lemmas 18, 19, and 32 in \cite{simple} to deduce):
$$
\sum_{R \in \Sigma_p} \frac{1}{\Disc_p(R)}\cdot \frac{1}{\#\Aut(R)} = \begin{cases} 
\frac{p+1}{p} & \mbox{ if }\Sigma_p =\Sigma_p^{\rm ntr},\\
\frac{1}{p^2} & \mbox{ if } \Sigma_p = \Sigma_p^{\rm tr}, \\
{1} & \mbox{ if } \Sigma_p = \Sigma_p^{\rm ur}; \\
\end{cases} \qquad \sum_{R \in \Sigma_3} \frac{1}{\Disc_3(R)}\cdot \frac{1}{\#\Aut(R)} = \begin{cases}
\frac{2}{27} & \mbox{ if } \Sigma_3 = \Sigma_3^{(3)}, \\
\frac{1}{27} & \mbox{ if } \Sigma_3 = \Sigma_3^{(9)}, \mbox{} \\
\frac{2}{81} & \mbox{ if } \Sigma_3 = \Sigma_3^{(81)}. 
\end{cases}
$$
In conjunction with \eqref{mass}, we thus obtain the desired asymptotes in Theorem \ref{simple}. As an example, we give the calculation below in case (e):

\begin{eqnarray*}
\lim_{X\rightarrow \infty} \frac{N_3^{(i)}(\cK_{S,S'}^{(9)},X)}{X}
 &=& \frac{1}{2n_i} \cdot \frac{4}{243} \cdot \prod_{p\notin S'} \frac{p^2-1}{p^2} \cdot\prod_{p \in S' \smallsetminus S} {\frac{p-1}{p}} \cdot\prod_{3 \neq p \in S} \frac{p-1}{p^3}  \\
 &=& \frac{1}{n_i} \cdot \frac{1}{9\cdot\zeta(2)} \cdot \prod_{p \in S' \smallsetminus S} \frac{p}{p+1}\cdot \prod_{p \in S} \frac{1}{p(p+1)}.
\end{eqnarray*}
\end{proof}

\subsection{Prescribing splitting conditions on the quadratic resolvents of $c$-valid cubic fields}

Now, let $\underline{\cS} = (S_+,S_-,S_0)$ be three disjoint sets of primes. We will next consider families $\cK_3(\underline{\cS})$ consisting of all cubic fields whose quadratic resolvent field is in $\cK_2(\underline{\cS})$. (Recall that $\cK_2(\underline{\cS})$ consists of all quadratic fields that split at the primes in $S_+$, remain inert at the primes in $S_-$, and ramify at the primes in $S_0$.)

\begin{theorem} Let $S$ denote a set of primes not containing $3$, and let $n_i = \#\Aut(\bR^{3-2i}\oplus \bC^i)$ for $i = 0$ or $1$. Additionally, let  $\underline{\cS} = (S_+,S_-,S_0)$ be three disjoint sets of primes such that:
\begin{itemize}
\item $S_+ \cap S$ only contains primes congruent to $1 \bmod 3$ .
\item $S_- \cap S$ only contains primes congruent to $2 \bmod 3$.
\end{itemize} As before, let $\cK_{S}$ denote the set of isomorphism classes of cubic fields that are totally ramified exactly at the primes $p \in S$. If $\cK_3^{S}({\underline{\cS}}) = \cK_S \cap \cK_3({\underline{\cS}})$, then we have:
$$\lim_{X \rightarrow \infty} \frac{\displaystyle N_3^{(i)}(\cK_3^{S}({\underline{\cS}}),X)}{X} = \frac{1}{n_i} \cdot \frac{1}{2\cdot\zeta(2)} \cdot\prod_{p \in S} \frac{1}{p(p+1)}\cdot\prod_{p \in S_0} \frac{1}{p+1} \cdot\prod_{p \in S_{\pm}\smallsetminus (S \cap S_{\pm})} \frac{p}{2(p+1)}$$

\end{theorem}

\begin{proof} 
For any prime $p$, let:
\begin{enumerate}
\item $\Sigma_p^{\rm mr}$ denote the set of all (isomorphism classes of) maximal cubic \'etale algebras over $\bZ_p$ that are minimally ramified, i.e., they decompose as $\bZ_p \oplus Q$ where $Q$ is a totally ramified quadratic \'etale algebra over $\bZ_p$; 
\item $\Sigma_p^{+}$ consist of the ring of integers of the unique unramified extension of degree $3$ over $\bQ_p$ as well as the algebra $\bZ_p \oplus \bZ_p \oplus \bZ_p$; 
\item $\Sigma_p^{-} = \{\bZ_p \oplus \bZ_{p^2}\}$ where $\bZ_{p^2}$ denotes the ring of integers of $\bQ_{p^2}$, the unique unramified extension of degree $2$ over $\bQ_p$.
\item $\Sigma_p^{\rm tr +}$ consists of maximal cubic algebras over $\bZ_p$ that are totally ramified and whose quadratic resolvent algebra is contained in $\bQ_p \oplus \bQ_p$. 
\item $\Sigma_p^{\rm tr -}$ consists of maximal cubic algebras over $\bZ_p$ that are totally ramified and whose quadratic resolvent is contained in $\bQ_{p^2}$.
\end{enumerate} 
As before, denote the set of all (isomorphism classes of) maximal cubic \'etale algebras over $\bZ_p$ that are totally ramified as $\Sigma_p^{\rm tr} = \Sigma_p^{\rm tr +} \cup \Sigma_p^{\rm tr -}$, denote the set of all unramified cubic \'etale algebras over $\bZ_p$ as $\Sigma_p^{\rm ur} = \Sigma_p^{+} \cup \Sigma_p^{-}$, and denote the set of all cubic \'etale algebras over $\bZ_p$ that are not totally ramified as $\Sigma_p^{\rm ntr} = \Sigma_p^{\rm ur} \cup \Sigma_p^{\rm mr}$.

If $\Sigma = (\Sigma_p)_p$ denotes the acceptable collection of local specifications defining $\cK_3^{S}({\underline{\cS}})$, then we have:
$$\Sigma_p = \begin{cases} \Sigma^{\rm ntr}_p & \mbox{ if } p \notin S_+ \cup S_- \cup S_0 \cup S, \\
\Sigma_p^{\rm tr} & \mbox{ if } p \in S\smallsetminus (S\cap (S_+\cup S_-)),\\
 \Sigma_p^{\rm mr} & \mbox{ if } p \in S_0, \\
 \Sigma_p^{\pm} & \mbox{ if } p \in S_{\pm}\smallsetminus (S\cap S_{\pm}), \\
 \Sigma_p^{\rm tr \pm} & \mbox{ if } p \in S \cap S_{\pm}.
\end{cases}$$
It is straightforward to determine that
$$
\sum_{R \in \Sigma_p} \frac{1}{\Disc_p(R)}\cdot \frac{1}{|\Aut(R)|} = \begin{cases} 
\frac{1}{p} & \mbox{ if } \Sigma_p = \Sigma_p^{\rm mr},\\
\frac{1}{2} & \mbox{ if } \Sigma_p = \Sigma_p^\pm, \\
{\frac{1}{p^2}} & \mbox{ if } \Sigma_p  = \Sigma_p^{\rm tr +} \mbox{ and } p \equiv 1 \bmod 3. \\
{\frac{1}{p^2}} & \mbox{ if } \Sigma_p  = \Sigma_p^{\rm tr -} \mbox{ and } p \equiv 2 \bmod 3.
\end{cases}
$$
Using \eqref{mass} and the computations following it, we conclude the theorem: 
\begin{eqnarray*}
\lim_{X\rightarrow \infty} \frac{N_3^{(i)}(\cK_3^{S}({\underline{\cS}}),X)}{X}
 &=& \frac{1}{2n_i} \cdot\prod_{p\notin S_\pm\cup S_0\cup S} 
\frac{p^2-1}{p^2} \cdot\prod_{p \in S} \frac{p-1}{p^3}  \cdot \prod_{p \in S_0} \frac{p-1}{p^2}\cdot \prod_{p \in S_\pm \smallsetminus (S \cap S_{\pm})} \frac{p-1}{2p} \\
 &=& \frac{1}{n_i} \cdot \frac{1}{2\cdot\zeta(2)} \cdot \prod_{p \in S} \frac{1}{p(p+1)}\cdot\prod_{p \in S_0} \frac{1}{p+1} \cdot\prod_{p \in S_{\pm}\smallsetminus (S \cap S_{\pm})} \frac{p}{2(p+1)}.
\end{eqnarray*}
(Above, we have abused notation slightly by letting $S_{\pm} = S_+ \cup S_-$.)
\end{proof}

\section{The mean size of $\Cl^-_{3}(K_2,c)$ over families of quadratic fields $K_2$} \label{caseng}

In this section, we begin by computing the average number of $3$-torsion elements in the minus eigenspace of their ray class groups of fixed conductor $c$ in families of quadratic fields ordered by discriminant. We then determine the mean size of $\Cl^-_3(K_2,c)$ over certain subfamilies of quadratic fields $K_2$, namely those defined by local specifications at a finite number of primes. 
We first vary over the quadratic fields whose discriminants are coprime to the choice of conductor $c$ and obtain a different average that only depends on the number of primes dividing $c$. We then average over quadratic fields that have prescribed splitting conditions at a finite number of primes.

\subsection{The average number of 3-torsion elements in the minus eigenspaces of the ray class groups of quadratic fields}

For shorthand, let $$\Avg^{(i)}(\Cl_3^-(c)) :=  \lim_{X \rightarrow \infty} \frac{\ds \sum_{0 < (-1)^i \Disc(K_2) < X} \#\Cl_{3,}^-(K_2,c)}{\ds \sum_{0 < (-1)^i \Disc(K_2) < X} 1}.$$

\begin{proposition} \label{h-} Fix a positive integer $c$, and recall that $n_0 = 6$ and $n_1 = 2$. Then
\begin{enumerate} 
\item[\rm (a)] If $3 \nmid c$, then $\Avg^{(i)}(\Cl_3^-(c)) = \ds 1 + \frac{2}{n_i}\cdot\prod_{p \mid c} \left(1 + \frac{p}{p + 1}\right);$
\item[\rm (b)] If $3 \mid \mid c$, then $\Avg^{(i)}(\Cl_3^-(c)) = \ds 1 + \frac{12}{7n_i}\cdot \prod_{p \mid c} \left(1 + \frac{p}{p + 1}\right);$
\item[\rm (c)] If $9 \mid c$, then $\Avg^{(i)}(\Cl_3^-(c)) = \ds 1 + \frac{30}{7n_i}\cdot\prod_{p \mid c} \left(1 + \frac{p}{p + 1}\right).$

\end{enumerate}
\end{proposition}

\begin{proof}
Let $S_c$ denote the set of primes dividing $c$, and recall that the density of fundamental discriminants is:
\begin{equation}\label{quadcount}
	\ds \lim_{X\rightarrow \infty} \frac{\ds \sum_{0 < (-1)^i\Disc(K_2)< X} 1}{X} = \frac{1}{2\cdot\zeta(2)}.
	\end{equation}\\
	
(a) \ \ Assume $3 \nmid c$.   
 Recall that a $c$-valid cubic field $K_3$ has discriminant $\Disc(K_3) = df^2$ where $d$ is the discriminant of its quadratic resolvent field and $f \mid c$. Furthermore, it follows (for example, from Proposition 8.4.1(1) of \cite{cohen}) that a prime $p$ totally ramifies in $K_3$ if and only if $p \mid f$. Proposition \ref{decomp} in conjunction with \eqref{quadcount} therefore implies:
   	 $$\Avg^{(i)}(\Cl_3^-(c)) =
    		  1 + 4\cdot \zeta(2) \cdot \lim_{X\rightarrow \infty}\frac{{\ds \sum_{S \subseteq S_c}} N_3^{(i)}(\cK_S,X\cdot\prod_{p \in S} p^2)}{X},$$\\
where $S_c$ is equal to the set of primes dividing $c$.
By Theorem \ref{simple}(a), we conclude that
	\begin{eqnarray*}
		\ds \Avg^{(i)}(\Cl_3^-(c)) &= &\ds 1 + 4 \cdot \zeta(2) \cdot {\ds {\sum_{S\subseteq S_c}}} \ds\left(\prod_{p \in S} p^2 \cdot \frac{1}{n_i}\cdot\frac{1}{2\cdot\zeta(2)} \cdot \prod_{p \in S} \frac{1}{p(p+1)}\right)\\
		&  =& \ds 1 + \frac{2}{n_i}\cdot \prod_{p \in S_c} \left(1 + \frac{p}{p + 1}\right). 
	\end{eqnarray*}
This proves (a). We skip the proof of (b) as it is very similar to the proof of (c).

\medskip
\noindent (c) \ \ Assume $9 \mid c$. By Proposition \ref{decomp} and \eqref{quadcount} (in conjunction with Proposition 8.4.1(1) of \cite{cohen}), we have that
\begin{eqnarray*}
\Avg^{(i)}(\Cl_3^-(c)) &=& 1 + 4\cdot\zeta(2)\cdot\left( \lim_{X\rightarrow \infty}\frac{{\ds \sum_{S \subseteq S_c\smallsetminus\{3\}}} N_3^{(i)}(\cK_S,X\cdot\ds\prod_{p \in S} p^2) + \  N_3^{(i)}(\cK_{S\cup\{3\}}^{(3)},X \cdot \ds\prod_{p \in S \cup \{3\}} p^2)}{X}\right.\\
& & \quad \qquad \qquad + \left. \lim_{X\rightarrow\infty} \frac{\ds\sum_{S\subseteq S_c \smallsetminus \{3\}}N_3^{(i)}(\cK_{S \cup \{3\}}^{(9)} , 9X \cdot \ds \prod_{p \in S \cup \{3\}} p^2)}{X}\right),
\end{eqnarray*}
where $S_c$ again denotes the set of primes dividing $c$. Theorem \ref{simple}(a) and (c) then imply that $\Avg^{(i)}(\Cl_3^-(c))$ is equal to
\begin{eqnarray*}
\ds  
& & 1 + 4\cdot\zeta(2) \cdot \sum_{S \subseteq S_c \smallsetminus \{3\}} \left(\ \frac{1}{n_i}\cdot\frac{1}{2\cdot\zeta(2)} \cdot\prod_{p \in S}  \frac{p}{p+1}  \ + \  \frac{1}{n_i}\cdot\frac{1}{3\cdot \zeta(2)} \cdot\prod_{p \in S \cup \{3\}} \frac{p}{p+1} + \ \frac{3}{n_i}\cdot\frac{1}{2\cdot\zeta(2)} \cdot\prod_{p \in S \cup \{3\}} \frac{p}{p+1}\right) \\
& = & 1 + \frac{30}{7n_i} \cdot \prod_{p \in S_c} \left(1 + \frac{p}{p+1}\right).
\end{eqnarray*}
\end{proof}

\subsection{The asymptotic number of quadratic fields in certain acceptable families}

 In order to vary the family of quadratic fields we average over, we must first determine the asymptotics of these families. We first describe the asymptotic number of discriminants of quadratic fields that are relatively prime to a fixed integer. 

\begin{lemma}\label{coprime} Let $c$ be a positive integer. 
$$\lim_{X\rightarrow \infty} \frac{{\ds \sum_{{\scriptstyle(\Disc(K_2),c) = 1}\atop{\scriptstyle0 < (-1)^i\Disc(K_2) < X}} 1\ \ }}{X} = \frac{1}{2\cdot\zeta(2)} \cdot \prod_{p \mid c} \frac{p}{p+1}.$$
\end{lemma}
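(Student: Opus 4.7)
The plan is a direct local-to-global density computation. I use the standard classification of fundamental discriminants into three disjoint types according to their $2$-adic behavior: type (I) $d$ odd squarefree with $d \equiv 1 \pmod 4$; type (II) $d = 4m$ with $m$ odd squarefree and $m \equiv 3 \pmod 4$ (in the positive case); type (III) $d = 8k$ with $k$ odd squarefree. Using the density $\tfrac{4}{\pi^2}$ of odd squarefree integers together with equidistribution modulo $4$, a short calculation gives that the count of positive fundamental discriminants up to $X$ of types (I), (II), (III) is asymptotically $\tfrac{2X}{\pi^2}$, $\tfrac{X}{2\pi^2}$, $\tfrac{X}{2\pi^2}$ respectively, summing to the $\tfrac{3X}{\pi^2}$ of~\eqref{quadcount}; the negative case is identical after the obvious relabelling of the mod-$4$ conditions, so it suffices to argue for one choice of sign.

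Next I impose the coprimality $\gcd(d, c) = 1$. By CRT the local conditions at distinct primes of $c$ are independent, so I compute the effect prime by prime. For each odd prime $p \mid c$, the condition $p \nmid d$ is a condition on the odd squarefree part of $d$, independent of the $2$-adic type and of conditions at other primes of $c$; the standard Euler product $\sum_{n\ \text{sqfree},\ p\nmid n} n^{-s} = (\zeta(s)/\zeta(2s)) \cdot (1-p^{-s})/(1-p^{-2s})$ yields by partial summation a local density factor of $\tfrac{1-p^{-1}}{1-p^{-2}} = \tfrac{p}{p+1}$ in each of the three types. Thus when $c$ is odd, summing the three types produces density $\tfrac{3}{\pi^2}\prod_{p\mid c}\tfrac{p}{p+1}$, as desired.

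When $c$ is even, the condition $\gcd(d, c) = 1$ additionally forces $d$ odd, killing types (II) and (III) and leaving only type (I) with density $\tfrac{2}{\pi^2}\prod_{p \mid c,\, p>2}\tfrac{p}{p+1}$. This equals $\tfrac{3}{\pi^2}\prod_{p \mid c}\tfrac{p}{p+1}$ because the factor at $p=2$ in the product is exactly $\tfrac{2}{3}$, closing the case. There is no serious obstacle here; the only subtlety is the separate handling of $p = 2$ via the three-type classification, where the numerology conspires so that the $2$-adic factor assembles into the same uniform product $\prod_{p \mid c} \tfrac{p}{p+1}$ used for odd primes.
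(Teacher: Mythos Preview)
Your argument is correct. You directly classify fundamental discriminants by their $2$-adic type, compute the density of each type from the density $4/\pi^2$ of odd squarefree integers and equidistribution modulo $4$, and then impose the coprimality condition prime by prime; the numerology checks out, including the pleasant coincidence at $p=2$ where killing types (II) and (III) leaves exactly the factor $2/3 = 2/(2+1)$.

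This is a genuinely different route from the paper's proof. The paper does not unpack the structure of fundamental discriminants at all; it simply quotes the local mass formula for quadratic \'etale $\bZ_p$-algebras from \cite{formulae} (Proposition~2.2 and (4.2) there), writing the density as
\[
\frac{1}{2}\cdot \prod_{p \nmid c} \frac{p-1}{p}\Bigl(1+\frac{1}{p}\Bigr) \cdot \prod_{p \mid c} \frac{p-1}{p}\cdot 1 \;=\; \frac{3}{\pi^2}\prod_{p\mid c}\frac{p}{p+1},
\]
where the factor at $p\mid c$ is the local mass of the unramified quadratic algebras only. What the paper's approach buys is brevity and uniformity: no case split at $2$ is visible, because the mass formula already packages the $2$-adic irregularities. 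What your approach buys is self-containment: you never leave the elementary analytic toolkit (squarefree densities, equidistribution in residue classes, Euler products), and in particular you do not need the cited reference. Either argument is adequate here since the lemma is standard; yours makes more explicit \emph{why} the local factor at $2$ matches the pattern $p/(p+1)$, which the mass-formula citation obscures.
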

\begin{proof} By Proposition 2.2 and (4.2) in \cite{formulae}, we have that the number of real (resp. imaginary) quadratic fields that are unramified away from $c$ is asymptotically equal to
	$$\frac{1}{2}\cdot \prod_{p \nmid c} \left(\frac{p-1}{p}\cdot \left(1 + \frac{1}{p}\right)\right) \cdot \prod_{p \mid c} \left(\frac{p-1}{p}\cdot 1\right) \cdot X = \frac{1}{2\cdot\zeta(2)} \cdot \left(\prod_{p \mid c} \frac{p}{p+1}\right)\cdot X.$$
\end{proof}

Next, we determine the asymptotic number of quadratic fields with prescribed splitting at a finite number of primes.

\begin{lemma}\label{S} Let $\underline{\cS} = (S_+,S_-,S_0)$ be disjoint sets of primes, and let $\cK_2(\underline{\cS})$ denote the set of isomorphism classes of quadratic fields $K_2$ such that any prime $p \in S_+$ splits in $K_2$, any prime $p \in S_-$ remains inert in $K_2$, and any prime $p \in S_0$ ramifies in $K_2$. We then have:

$$\lim_{X\rightarrow \infty} \frac{{\ds \sum_{\scriptstyle K_2 \in \cK_2(\underline{\cS})\atop{\scriptstyle0 < (-1)^i\Disc(K_2) < X}} 1\ \ }}{X} = \frac{1}{2\cdot\zeta(2)} \cdot \prod_{p \in S_0} \frac{1}{p+1} \cdot \prod_{p \in S_{\pm}} \frac{p}{2(p+1)},$$
where $S_{\pm} = S_+ \cup S_-$.
\end{lemma}

\begin{proof} Similarly, by Proposition 2.2 and (4.2) in \cite{formulae}, we have that the asymptotic number of real (resp. imaginary) quadratic fields in $\cK_2(\underline{\cS})$ with (absolute) discriminant bounded by $X$ is
	$$\frac{1}{2} \cdot \prod_{p \notin \underline{\cS}} \left(\frac{p-1}{p} \cdot\left(1 + \frac{1}{p}\right)\right)\cdot \prod_{p \in S_{\pm}} \left(\frac{p-1}{p} \cdot \frac{1}{2}\right) \cdot \prod_{p \in S_0} \left(\frac{p-1}{p} \cdot \frac{1}{p}\right)\cdot X = \frac{1}{2\cdot\zeta(2)} \cdot \prod_{p \in S_0} \frac{1}{p+1} \cdot \prod_{p \in S_{\pm}} \frac{p}{2(p+1)}\cdot X$$

\end{proof}

\subsection{Averaging $\#\Cl^-_3(K_2,c)$ over quadratic fields unramified at $c$}
We vary over only those quadratic fields whose discriminants are coprime to the choice of fixed conductor. 

For shorthand, let $$\Avg^{(i)}_c(\Cl_3^-(c)) :=  \lim_{X \rightarrow \infty} \frac{\ds \sum_{{\scriptstyle(\Disc(K_2),c) = 1}\atop{\scriptstyle0 < (-1)^i\Disc(K_2) < X}} \#\Cl_{3}^-(K_2,c)}{\ds \sum_{{\scriptstyle(\Disc(K_2),c) = 1}\atop{\scriptstyle0 < (-1)^i\Disc(K_2) < X}} 1}.$$ 

\begin{proposition}\label{h-prime} Fix a positive integer $c$, and let $n$ denote the number of distinct primes dividing $c$. Recall that $n_0 = 6$ and $n_1 = 2$. Then
\begin{enumerate}
\item[\rm (a)] If $3 \nmid c$, then $\Avg^{(i)}_c(\Cl_3^-(c)) = 1 + 2\cdot\ds\frac{2^{n}}{n_i};$
\item[\rm (b)] If $3 \mid\mid c$, then $\Avg^{(i)}_c(\Cl_3^-(c)) = 1 + \ds\frac{2^n}{n_i};$
\item[\rm (c)] If $9 \mid c$, then $\Avg^{(i)}_c(\Cl_3^-(c)) = 1 + 3\cdot\ds\frac{2^n}{n_i}.$
\end{enumerate}
\end{proposition}
\begin{proof}
Let $S_c$ denote the set of primes dividing $c$. \\
(a) \ \ Assume $3 \nmid c$. 
 Proposition \ref{decomp} combined with Lemma \ref{coprime} implies that
   	 $$\Avg_c^{(i)}(\Cl_3^-(c)) =
    		  1 + 4\cdot\zeta(2) \cdot \left(\prod_{p\mid c} \frac{p+1}{p}\right) \cdot  \lim_{X\rightarrow \infty} \frac{{\ds \sum_{S \subseteq S_c}} N_3^{(i)}(\cK_{S,S_c} , X \cdot \prod_{p \in S} p^2)}{X}.$$
By Theorem \ref{simple}(d), we conclude that
	\begin{eqnarray*}
		\ds \Avg_c^{(i)}(\Cl_3^-(c)) &= &\ds 1 + 4\cdot\zeta(2)\cdot \prod_{p \mid c} \frac{p+1}{p} \cdot {\ds {\sum_{S\subseteq S_c}}} \ds\left(\frac{1}{n_i}\cdot\frac{1}{2\cdot\zeta(2)}\cdot\prod_{p \in S} \frac{p}{p+1} \cdot \prod_{p \in S_c \smallsetminus S} \frac{p}{p+1}\right)\\
		&  =& \ds 1 + \frac{2^{n+1}}{n_i}. 
	\end{eqnarray*}\\
(b) \ \ Note that for non-Galois cubic fields $K_3$ that are totally ramified at $3$, $\Disc(K_3)$ is exactly divisible by $3^3$, $3^4$, or $3^5$, and in order for the quadratic resolvent $K_2$ of $K_3$ to have discriminant relatively prime to $3$, then $\Disc(K_3) = \Disc(K_2)f^2$ where either $3 \nmid f$ or $9 \mid\mid f$. Thus, if $3 \mid \mid c$, any quadratic field $K_2$ that is unramified at $3$ satisfies 
	$$\Cl^-(K_2,c) = \Cl^-(K_2,\textstyle\frac{c}{3}).$$
Thus, Proposition \ref{decomp}, Lemma \ref{coprime}, and Theorem \ref{simple}(d) together imply that
	\begin{eqnarray*}
		\ds \Avg_c^{(i)}(\Cl_3^-(c)) &= &1 + 4\cdot\zeta(2) \cdot \left(\prod_{p\mid c} \frac{p+1}{p}\right) \cdot \lim_{X\rightarrow \infty}\frac{{\ds \sum_{S \subseteq S_c\smallsetminus \{3\}}} N_3^{(i)}(\cK_{S,S_c}, X\cdot \prod_{p \in S} p^2)}{X}\\
				&  =& \ds 1 + \frac{2^{n}}{n_i}. 
	\end{eqnarray*}\\	
(c) \ \ If $9 \mid c$, by Proposition \ref{decomp} and Lemma \ref{coprime}, we have
\begin{eqnarray*}\Avg^{(i)}_c(\Cl_3^-(c))  &=&1 + 4\cdot \zeta(2) \cdot
    		  \left(\prod_{p\mid c} \frac{p+1}{p}\right) \cdot  \left(\lim_{X\rightarrow \infty} \frac{{\ds \sum_{S \subseteq S_c\smallsetminus \{3\}}} N_{3}^{(i)}( \cK_{S,S_c} ,X \cdot \prod_{p \in S} p^2)}{X}\right. \\
 & & \qquad \qquad  \qquad +\left. \lim_{X\rightarrow \infty} \frac{\ds\sum_{S\subseteq S_c \smallsetminus\{3\}} N_3^{(i)}(\cK^{(9)}_{S\cup\{3\},S_c} , 9X \cdot \prod_{p \in S\cup\{3\}} p^2)}{X}\right).
		  \end{eqnarray*}
Finally, by Theorem \ref{simple}(d) and (e), we conclude that
	$$\Avg^{(i)}_c(\Cl_3^-(c)) = 1 + 3\cdot \frac{2^n}{n_i}.$$
\end{proof}

\subsection{Averaging $\#\Cl^-_3(K_2,c)$ over quadratic fields with prescribed splitting at a finite number of primes}

Next, we vary over quadratic fields in $\cK_2(\underline{\cS})$. Let $\underline{\cS} = (S_+,S_-,S_0)$ be disjoint sets of primes, and recall that $\cK_2(\underline{\cS})$ denotes the set of isomorphism classes of quadratic fields $K_2$ such that any prime $p \in S_+$ splits in $K_2$, any prime $p \in S_-$ remains inert in $K_2$, and any prime $p \in S_0$ ramifies in $K_2$. Recall that we set $S_{\pm} = S_+ \cup S_-$.

For shorthand, let $$\Avg^{(i)}_{\underline{\cS}}(\Cl_3^-(c)) :=  \lim_{X \rightarrow \infty} \frac{\ds \sum_{{\scriptstyle K_2 \in \cK_2(\underline{\cS})}\atop{\scriptstyle0 < (-1)^i\Disc(K_2) < X}} \#\Cl_{3}^-(K_2,c)}{\ds \sum_{{\scriptstyle K_2 \in \cK_2(\underline{\cS})}\atop{\scriptstyle0 < (-1)^i\Disc(K_2) < X}} 1}.$$ 

\begin{proposition}\label{h-S} Fix a positive integer $c$ coprime to $3$, and let $\underline{\cS} = (S_+,S_-,S_0)$ be disjoint sets of primes such that any prime $p \mid c$ is not contained in $S_0$. If $S_{+}^{\rm good}$ $($respectively, $S_-^{\rm good})$ denote the subset consisting of all primes $p$ in $S_{+}$ $($resp., in $S_-)$ that are congruent to $1 \bmod 3$ $($resp., $2 \bmod 3)$ and $p \mid c$. We then have 
	$$\Avg_{\underline{\cS}}^{(i)}(\Cl_3^-(c)) =  1 + \frac{2}{n_i}\cdot 3^{\#(S^{\rm good}_+ \cup S^{\rm good}_-)}\cdot\prod_{{\scriptstyle p \mid c}\atop{\scriptstyle p \notin S_{\pm}}} \left(1 + \frac{p}{p + 1}\right)$$
\end{proposition}
\begin{proof}
In order to determine $\Avg_{\underline{\cS}}^{(i)}(\Cl_3^-(c))$, by Proposition \ref{decomp}, we must compute the asymptotic number of non-cyclic $c$-valid cubic fields whose quadratic resolvent field is in $\cK_2(\underline{\cS})$. If $p \in S_+$ and so $K_2 \in \cK_2(\underline{\cS})$ splits at $p$, then in any non-cyclic $c$-valid cubic field $K_3$ with quadratic resolvent $K_2$, $p$ remains inert in $K_3$, $p$ splits completely, or $p$ is totally ramified. Additionally, if $p \in S_-$, then $p\cO_{K_3}$ either decomposes into a product of exactly two distinct prime ideals or totally ramifies. Finally, we recall that a prime $p$ totally ramifies in $K_3$ with discriminant $\Disc(K_2)f^2$ if and only if $p \mid f$.  

Let $S_c$ denote the set of primes dividing $c$, and recall that by assumption, $S_c \cap S_0 = \emptyset$. Additionally, let $S_c^{\rm good}$ denote the set of all primes $p \mid c$ such that if $p \in S_+$ (respectively, if $p \in S_-$), then $p$ is congruent to $1 \bmod 3$ (resp., to $2 \bmod 3$), and set $c_{\rm good} = \prod_{p \in S_c^{\rm good}} p$. Proposition \ref{decomp} combined with Lemma \ref{S} implies that
   	 $$\Avg_{\underline{\cS}}^{(i)}(\Cl_3^-(c_{\rm good})) =
    		  1 + 4\cdot\zeta(2) \cdot \left(\prod_{p \in S_{\pm}} \frac{2(p+1)}{p}\right)\cdot\left(\prod_{p \in S_0} p+1 \right)\cdot  \lim_{X\rightarrow \infty} \frac{{\ds \sum_{S \subseteq S^{\rm good}_c}} N_3^{(i)}(\cK_{3}^S(\underline{\cS}) , X \cdot \prod_{p \in S} p^2)}{X}.$$
By Theorem \ref{simple}(a), we conclude that $\Avg_{\underline{\cS}}^{(i)}(\Cl_3^-(c_{\rm good}))$ is equal to 
\begin{eqnarray*}
&& 1 + 4\cdot\zeta(2)\cdot \left(\prod_{p \in S_{\pm}} \frac{2(p+1)}{p}\right) \cdot {\ds {\sum_{S\subseteq S^{\rm good}_c}}} \ds\left(\frac{1}{n_i}\cdot\frac{1}{2\cdot\zeta(2)} \cdot \prod_{p \in S}  \frac{p}{p+1} \cdot \prod_{p \in S_{\pm}\smallsetminus (S \cap S_{\pm})} \frac{p}{2(p+1)}\right)\\
		&  =& \ds 1 + \frac{2}{n_i}\cdot \sum_{S \subseteq S^{\rm good}_c} \left(\prod_{p \in S\smallsetminus (S \cap S_{\pm})} \frac{p}{p+1}\cdot\prod_{p \in S \cap S_{\pm}} 2\right)\\\\
		&=& \ds 1 + \frac{2}{n_i}\cdot 3^{\#(S^{\rm good}_c\cap S_{\pm})}\cdot \prod_{p \in S_c^{\rm good} \smallsetminus (S_c^{\rm good} \cap S_{\pm})} \left(1 + \frac{p}{p+1}\right).
	\end{eqnarray*}
We finish the proof by verifying that $\Avg_{\underline{\cS}}^{(i)}(\Cl_3^-(c_{\rm good})) = \Avg_{\underline{\cS}}^{(i)}(\Cl_3^-(c)).$ Indeed, the 3-torsion subgroups of ray class groups of conductor $\frac{c}{c_{\rm good}}$ are trivial for quadratic fields in $\cK_2(\underline{\cS})$. If a prime $p \mid c$ and $p \nmid c_{\rm good}$, then either $p \in S_+$ and $p \equiv 2 \bmod 3$ or $p \in S_-$ and $p \equiv 1 \bmod 3$. In both of these cases, there are no cubic extensions of a quadratic field in $\cK_2(\underline{\cS})$ that are totally ramified at $p$. 
\end{proof}

\begin{remark}\label{afterh-S} Let $c$ be a positive integer exactly divisible by $3$. If $\underline{\cS} = (S_+, S_-, S_0)$ is as in Proposition \ref{h-S}, but we further assume $3 \notin S_+ \cup S_-$, then
\begin{eqnarray*}
\Avg^{(i)}_{\underline{\cS}}(\Cl_3^-(c)) &=&  1 + \frac{12}{7n_i} \cdot 3^{\#(S^{\rm good}_+ \cup S^{\rm good}_-)}\cdot\prod_{{\scriptstyle p \mid c}\atop{\scriptstyle p \notin S_{\pm}}} \left(1 + \frac{p}{p + 1}\right),
\end{eqnarray*}
where $S^{\rm good}_{\pm}$ are as in Proposition \ref{h-S}.
\end{remark}

\section{Proofs of Theorems \ref{main}-\ref{2} and Corollary \ref{3}}

We put together the results of the previous sections in order to conclude Theorems \ref{main} and \ref{2}, as well as Corollary \ref{3}. For Theorem \ref{main}, we will first allow $K_2$ to vary over all quadratic fields of bounded discriminant and use the combination of Propositions \ref{decomp}, \ref{h+}, and \ref{h-}.  Afterwards, we only vary over the discriminants of quadratic fields that are coprime to the fixed conductor, and we combine Propositions \ref{decomp}, \ref{h+}, and \ref{h-prime} in order to conclude Theorem \ref{2}. Corollary \ref{3} is then derived from a generalization of Theorem \ref{main}.

\subsection{Proof of Theorem \ref{main}}

We combine Propositions \ref{h-} and \ref{h+} of the previous sections to prove Theorem \ref{main}. Let $c$ be an integer, and assume that there are $m$ primes dividing $c$ that are congruent to $1 \bmod{3}$. Define $k$ to satisfy $3^k \mid \mid c$, and recall that $\Cl^+_{3}(K_2,c)$ only depends on $m$ and $k$. It is independent of the choice of quadratic field, so in particular, we have by Proposition \ref{h+},
	\begin{eqnarray*} \lim_{X \rightarrow \infty} \frac{ \ds\sum_{0 < (-1)^i\Disc(K_2) < X} \#\Cl_{3}(K_2,c)}{\ds \sum_{0 < (-1)^i\Disc(K_2) < X} 1} &=& \#\Cl^+_{3}(K_2,c) \cdot \lim_{X \rightarrow \infty} \frac{\ds \sum_{0 < (-1)^i\Disc(K_2) < X} \#\Cl^-_{3}(K_2,c)}{\ds\sum_{0 < (-1)^i\Disc(K_2) < X} 1}. 
	 \end{eqnarray*}
	 
\noindent We conclude by Propositions \ref{h+} and \ref{h-} in conjunction with \eqref{pmdecomp} that
	\begin{equation}\label{thm1}
	\lim_{X \rightarrow \infty} \frac{\ds \sum_{0 < (-1)^i\Disc(K_2) < X} \#\Cl_{3}(K_2,c)}{\ds \sum_{0 < (-1)^i\Disc(K_2) < X} 1} =  
	\begin{cases} 3^m\cdot\left(1 + \ds\frac{2}{n_i}\cdot \ds\prod_{p \mid c} \left(1 + \frac{p}{p + 1} \right)\right) & \mbox{ if $k = 0$; } \\
	3^{m} \cdot \left(1 + \ds\frac{12}{7n_i}\cdot \ds\prod_{p \mid c} \left(1 + \frac{p}{p + 1} \right)\right) & \mbox{ if $k = 1$; } \\
	3^{m+1} \cdot \left(1 + \ds\frac{30}{7n_i} \cdot\ds \prod_{p \mid c} \left(1 + \frac{p}{p + 1} \right)\right) & \mbox{ if $k \geq 2$. } 
	\end{cases}
	\end{equation}
	
\subsection{Proof of Theorem \ref{2}}
In order to compute the average number of $3$-torsion elements in ray class groups of fixed conductor of quadratic fields with discriminant that is both bounded and coprime to the choice of conductor, we combine Propositions \ref{h+} and \ref{h-prime}. Let $c$ be an integer, and assume that there are $n$ distinct primes dividing $c$, $m$ of which are congruent to $1 \bmod 3$. Define $k$ to satisfy $3^k \mid \mid c$. 
By Proposition 3.1 and \eqref{pmdecomp},
	\begin{equation*} \lim_{X \rightarrow \infty} \frac{ \ds\sum_{{\scriptstyle(\Disc(K_2),c) = 1}\atop{\scriptstyle0 < (-1)^i\Disc(K_2) < X}} \#\Cl_{3}(K_2,c)}{\ds \sum_{{\scriptstyle(\Disc(K_2),c) = 1}\atop{\scriptstyle0 < (-1)^i\Disc(K_2) < X}} 1} = \#\Cl^+_{3}(K_2,c) \cdot \lim_{X \rightarrow \infty} \frac{\ds \sum_{{\scriptstyle(\Disc(K_2),c) = 1}\atop{\scriptstyle0 < (-1)^i\Disc(K_2) < X}} \#\Cl^-_{3}(K_2,c)}{\ds\sum_{{\scriptstyle(\Disc(K_2),c) = 1}\atop{\scriptstyle0 < (-1)^i\Disc(K_2) < X}} 1}. \end{equation*}
Combining with Proposition \ref{h-prime}, we obtain
\begin{equation}
\lim_{X \rightarrow \infty} \frac{ \ds\sum_{{\scriptstyle(\Disc(K_2),c) = 1}\atop{\scriptstyle0 < (-1)^i\Disc(K_2) < X}} \#\Cl_{3}(K_2,c)}{\ds \sum_{{\scriptstyle(\Disc(K_2),c) = 1}\atop{\scriptstyle0 < (-1)^i\Disc(K_2) < X}} 1} = 
\begin{cases} 3^m\cdot\left(1 + \ds\frac{2^{n+1}}{n_i}\right) & \mbox{ if $k = 0$,} \vspace{.1in}\\
	3^{m} \cdot \left(1 + \ds\frac{2^n}{n_i}\right) & \mbox{ if $k = 1$, and } \vspace{.1in}\\
	3^{m+1} \cdot \left(1 + \ds3\cdot\frac{2^n}{n_i}\right) & \mbox{ if $k \geq 2$}.
	\end{cases}
	\end{equation}
\subsection{Generalizing Theorem \ref{main}(a) and the proof of Corollary \ref{3}}

Before turning to the proof of Corollary \ref{3}, we first generalize Theorem \ref{main}(a) when $(6,c) = 1$. 

\begin{theorem}\label{maingen} Let $c$ be an integer coprime to $3$, and let $\underline{\cS} = (S_+,S_-,S_0)$ be a disjoint set of primes such that no prime $p \mid c$ is contained in $S_0$. Let $m$ denote the number of primes $p \mid c$ that are congruent to $1 \bmod 3$, and let $S_+^{\rm good}$ $($respectively, $S_-^{\rm good})$ denote the subset of primes $p \mid c$ that are contained in $S_+$ $($resp., $S_-)$ and congruent to $1 \bmod 3$ $($resp., $2 \bmod 3)$.
\begin{enumerate}
\item[\rm (a)] The average size of the 3-torsion subgroups in ray class groups of conductor $c$ of real quadratic fields that are split at primes in $S_+$, inert at primes in $S_-$, and ramified at primes in $S_0$ is
	$$3^m\cdot\left(1 + \ds 3^{\#(S_+^{\rm good} \cup S_-^{\rm good})-1}\cdot\prod_{{\scriptstyle p \mid c}\atop{\scriptstyle p \notin S_+ \cup S_-}} \left(1 + \frac{p}{p+1}\right)\right)$$
when these quadratic fields are ordered by their discriminant. 
\item[\rm (b)] The average size of the 3-torsion subgroups in ray class groups of conductor $c$ of imaginary quadratic fields that are split at primes in $S_+$, inert at primes in $S_-$, and ramified at primes in $S_0$ is
	$$3^m\cdot\left(1 + \ds 3^{\#(S_+^{\rm good} \cup S_-^{\rm good})}\cdot\prod_{{\scriptstyle p \mid c}\atop{\scriptstyle p \notin S_+ \cup S_-}} \left(1 + \frac{p}{p+1}\right)\right)$$
when these quadratic fields are ordered by their discriminant.
\end{enumerate}
\end{theorem}

\begin{proof} In order to compute the average number of $3$-torsion elements in ray class groups of fixed conductor of quadratic fields with prescribed splitting, we combine Propositions \ref{h+} and \ref{h-S} using \eqref{pmdecomp}. We obtain:
	\begin{eqnarray*} \lim_{X \rightarrow \infty} \frac{ \ds\sum_{{\scriptstyle K_2 \in \cK_2(\underline{\cS})}\atop{\scriptstyle0 < (-1)^i\Disc(K_2) < X}} \#\Cl_{3}(K_2,c)}{\ds \sum_{{\scriptstyle K_2 \in \cK_2(\underline{\cS})}\atop{\scriptstyle0 < (-1)^i\Disc(K_2) < X}} 1} &=& \#\Cl^+_{3}(K_2,c) \cdot \lim_{X \rightarrow \infty} \frac{\ds \sum_{{\scriptstyle K_2 \in \cK_2(\underline{\cS})}\atop{\scriptstyle0 < (-1)^i\Disc(K_2) < X}} \#\Cl^-_{3}(K_2,c)}{\ds\sum_{{\scriptstyle K_2 \in \cK_2(\underline{\cS})}\atop{\scriptstyle0 < (-1)^i\Disc(K_2) < X}} 1}\\
&=&  3^m\cdot\left(1 + \ds\frac{2}{n_i}\cdot 3^{\#(S_+^{\rm good} \cup S_-^{\rm good})}\prod_{{\scriptstyle p \mid c}\atop{\scriptstyle p \notin S_+ \cup S_-}} \left(1 + \frac{p}{p+1}\right)\right).
	\end{eqnarray*}
\end{proof}

The above theorem (along with Remark \ref{afterh-S}) directly implies that as long as $S$ is disjoint from $S_+ \cup S_-$, the mean size of the 3-torsion subgroups in ray class groups of conductor $c$ are independent of the family $\cK_2(\underline{\cS})$ of quadratic fields one averages over. More precisely:

\begin{corollary}\label{maincor}Let $c$ be an integer such that $9 \nmid c$, and let $\underline{\cS} = (S_+,S_-,S_0)$ be a disjoint set of primes such that no prime $p \mid c$ is contained in $S_+ \cup S_- \cup S_0$. If  $m$ denotes the number of primes $p \mid c$ that are congruent to $1 \bmod 3$, then the average size of the 3-torsion subgroups in ray class groups of conductor $c$ of quadratic fields with $i$ pairs of complex embeddings that are split at primes in $S_+$, inert at primes in $S_-$, and ramifies in $S_0$ is equal to
	\begin{eqnarray*} 3^m\cdot \left(1 + \ds\frac{2}{n_i}\cdot \prod_{p \mid c} \left(1 + \frac{p}{p+1}\right)\right) & &\mbox{ if $3 \nmid c$,} \\
	3^m\cdot \left(1 + \ds\frac{12}{7n_i}\cdot \prod_{p \mid c} \left(1 + \frac{p}{p+1}\right)\right) && \mbox{ if $3 \mid c$.}\end{eqnarray*}
when these quadratic fields are ordered by discriminant.
\end{corollary}
This allows for the generalization of Theorem~3 given in Corollary \ref{3}, whose proof we turn to next. 
\medskip

\noindent {\em Proof of Corollary \ref{3}}. We use Corollary \ref{maincor} to compute lower bounds for the proportion $P_i(\underline{\cS},c)$ of quadratic fields in $\cK_2(\underline{\cS})$ with $i$ pairs of complex embeddings whose ray class groups of conductor $c$ have trivial $3$-torsion subgroup. 

We assume $(3,c) = 1$. If $m$ denotes the distinct number of primes dividing $c$ that are congruent to $1 \bmod 3$, we have by Theorem \ref{maingen},
	\begin{equation*}
	3^m\cdot\left(1 + \frac{2}{n_i}\cdot \prod_{p \mid c} \left( 1 + \frac{p}{p + 1} \right)\right) \geq  1 \cdot P_i(\underline{\cS},c) + 3 \cdot (1-P_i(\underline{\cS},c)).\label{proportion}
	\end{equation*}
Hence, $P_i(\underline{\cS},c) > 0$ if and only if $m = 0$ and  
$$n_i > \prod_{p \mid c}\left(1 + \frac{p}{p+1}\right).$$
Thus, we conclude automatically that for any conductor $c$ of the form $c=p$ where $p \equiv 2 \bmod 3$, a positive proportion of real (resp.\ imaginary) quadratic fields have trivial $3$-torsion subgroup in their ray class groups of conductor $c$. Additionally, for any conductor $c$ of the form $c = p_1p_2$ where $p_i \equiv 2 \bmod 3$, we see that a positive proportion of real quadratic fields have trivial $3$-torsion subgroup in their ray class groups of conductor $c$.

If $3 \mid \mid c$, and $m$ denotes the distinct number of primes dividing $c$ that are congruent to $1 \bmod 3$, we similarly have 
that $P(i,c) > 0$ if and only if $ m = 0$ and $$\frac{7n_i}{6} > \prod_{{p \mid c}} \left(1 + \frac{p}{p+1}\right).$$

Thus, for real quadratic fields, if $c$ is a product of $3$ and a prime $p$ which is congruent to $2 \bmod 3$, a positive proportion of real quadratic fields have trivial $3$-torsion subgroup in their ray class groups of conductor $c$. Additionally, a positive proportion of imaginary quadratic fields have trivial $3$-torsion subgroup in their ray class groups of conductor 3. \hfill $\square$

\begin{remark} Similarly, one can show that if $3 \nmid c$,
at least $50$\% of real quadratic fields have trivial $3$-torsion subgroup in their ray class groups of prime conductor $c \equiv 2 \bmod{3}$. If $3 \mid\mid c$,
at least $50$\% of real quadratic fields have trivial $3$-torsion subgroup in their ray class groups of conductor $3$ or $3p$ where $p \equiv 2 \bmod{3}$. 
\end{remark}

\section{Second main term and the proof of Theorem \ref{4}}
To compute the second main term for the mean number of $3$-torsion elements in ray class groups of quadratic fields of bounded discriminant, we use a refinement of Theorem \ref{simple}. For any set of primes $S$ not containing $3$, recall that $\cK_S$ denotes the set of isomorphism classes of cubic fields that are totally ramified {\em exactly} at the primes $p \in S$.  We first introduce some notation from \cite{simple} and \cite{TaniguchiThorne}. For a free $\bZ_p$-module $M$, define $M^{\rm Prim} \subset M$ by $M^{\rm Prim} := M \smallsetminus \{pM\}.$ Also, for any element $x$ in a cubic order, let $i(x) := [R:\bZ_p[x]]$. As in the proof of Theorem \ref{simple}, let $\Sigma^{S}$ denote the set of all isomorphism classes of rings of integers of cubic fields in $\cK_S$. Then, $\Sigma^{S}$ is {\em strongly acceptable} as defined in \cite{simple}. Thus, if $N_3^{(i)}(\Sigma^{S},X)$ denotes the number of cubic orders $R \in \Sigma^{S}$ satisfying $0 < (-1)^i\Disc(R) < X$, Theorem 1.3 of \cite{TaniguchiThorne} determines the asymptotic count with two main terms: 

\begin{equation}
\begin{array}{rcl}
N_3^{(i)}(\Sigma^S;X)\!\!\!&=&\!\!\!
\displaystyle{\frac{1}{2n_i}\cdot
\prod_p\Bigl(\frac{p-1}{p}\cdot\sum_{R\in\Sigma_p}
\frac{1}{\Disc_p(R)}\cdot\frac1{\#\Aut(R)}\Bigr)}
\cdot X \vspace{.1in} \\[.1in]  && + \,\,
\displaystyle\frac{c_2^{(i)}}{\zeta(2)}\cdot
\prod_p\Bigl((1-p^{-1/3})\cdot\sum_{R\in\Sigma_p}\frac{1}{\Disc_p(R)}\cdot\frac1{\#\Aut(R)}\int_{(R/\Z_p)^{{\rm Prim}}}i(x)^{2/3}dx\Bigr)
\cdot X^{5/6}\,\,\vspace{.1in}\\
& & + \,\,O_{\epsilon}(X^{5/6-7/138+\epsilon})\:\!,
\end{array}\end{equation}
where $dx$ assigns measure $1$ to $(R/\Z_p)^{{\rm Prim}}$, and additionally, 
	$$c_2^{(i)}=\begin{cases}
    \displaystyle{\frac{\sqrt{3}\zeta(2/3)\Gamma(1/3)(2\pi)^{1/3}}{30\Gamma(2/3)}}& \textrm{ if $i = 0,$}\vspace{.1in}\\
    \displaystyle{\,\,\;\;\;\;\frac{\,\zeta(2/3)\Gamma(1/3)(2\pi)^{1/3}}{10\Gamma(2/3)}}& \textrm{ if $i = 1;$ }\\
\end{cases} \qquad \mbox{ and } \qquad \Sigma_p = \begin{cases}  \Sigma^{\rm ntr}_p & \mbox{ if } p \notin S, \\
 \Sigma_p^{\rm tr} & \mbox{ if } p \in S.
\end{cases}$$
(Recall that for any prime $p$, $\Sigma_p^{\rm tr}$ denotes the set of all isomorphism classes of maximal cubic orders over $\bZ_p$ that are totally ramified, and $\Sigma^{\rm ntr}_p$ denotes the set of all isomorphism classes of maximal cubic orders over $\bZ_p$ that are not totally ramified.)

In order to compute the second main term's constant, we combine Table 1, Lemma 28, and Lemma 37 in \cite{simple} to determine
$$\sum_{R \in \Sigma_p} \frac{1}{\Disc_p(R)}\cdot\frac{1}{\#\Aut(R)} \int_{(R/\bZ)^{\rm Prim}} i(x)^{2/3} dx = \begin{cases} \ds\frac{1}{p(p+1)} + \frac{1}{p^{4/3}(p+1)} & \mbox{ if $p \in S$, and } \vspace{.1in}\\
\ds \frac{p^{1/3}}{p^{1/3} - 1} - \frac{p^{2/3} + p^{1/3}}{p(p+1)(p^{1/3} - 1)} & \mbox{ if $p \notin S$.} \end{cases}$$

We then calculate that $\ds\prod_p\left(1-p^{-1/3}\right)\cdot \ds\prod_{R \in \Sigma_p} \frac{1}{\Disc_p(R)}\cdot \frac{1}{\#\Aut(R)}\int_{(R/\bZ)^{\rm Prim}}i(x)^{2/3}dx$ is equal to
	\begin{eqnarray*}
	& & \prod_{p} \left(1 -{p^{-1/3}}\right) \left(\frac{p^{1/3}}{p^{1/3} - 1} - \frac{p^{2/3} + p^{1/3}}{p(p+1)(p^{1/3} - 1)}\right) \cdot \prod_{p \in S} \frac{\ds\frac{1}{p(p+1)} + \frac{1}{p^{4/3}(p+1)}}{\ds\frac{p^{1/3}}{p^{1/3} - 1} - \frac{p^{2/3} + p^{1/3}}{p(p+1)(p^{1/3} - 1)}} \\
&=& \prod_p \left(1- {p^{-1/3}}	\right)\cdot\left(\frac{p^{1/3}p(p+1) - p^{2/3} - p^{1/3}}{p(p+1)(p^{1/3} - 1)}\right) \cdot \prod_{p \in S} \frac{p^{2/3} - 1}{p^{8/3} + p^{5/3} - p - p^{2/3}} \\ 
&=& \prod_p 1-\frac{p^{1/3} + 1}{p(p+1)}\cdot \prod_{p \in S} \frac{1}{p(p+1)}\cdot \frac{1 - p^{-2/3}}{1 -\ds \frac{p^{1/3}+1}{p(p+1)}}.\end{eqnarray*}
We can thus conclude the following refinement of Theorem \ref{simple}. 

\begin{theorem}\label{secondsimple}
Let $S$ denote a set of primes not containing $3$, and let $n_i = |\Aut(\bR^{3-2i} \oplus \bC^i)|$ for $i = 0$ or $1$. Let $\cK_S$ denote the set of isomorphism classes of cubic fields that are totally ramified {\em exactly} at the primes $p \in S$. 
\begin{eqnarray*}
N^{(i)}_3(\cK_S,X) &=& \frac{1}{n_i}\cdot\frac{1}{2\cdot\zeta(2)}\cdot \prod_{p \in S} \frac{1}{p(p+1)} \cdot X  \\ & & + \  \frac{c_2^{(i)}}{\zeta(2)} \cdot \prod_{p} \left(1 - \frac{p^{1/3} + 1}{p(p+1)}\right) \prod_{p \in S} \left(\frac{1}{p(p+1)}\cdot \frac{1 - p^{-2/3}}{1-\frac{p^{1/3}+1}{p(p+1)}}\right)\cdot X^{5/6} \\ & & + \ O_\epsilon(X^{5/6 - 7/138 + \epsilon}),
\end{eqnarray*}
where $$c_2^{(i)}=\begin{cases}  
  \displaystyle{\frac{\sqrt{3}\zeta(2/3)\Gamma(1/3)(2\pi)^{1/3}}{30\Gamma(2/3)}}& \textrm{ if $i = 0,$ and }\vspace{.1in}\\
  \displaystyle{\,\,\;\;\;\;\frac{\,\zeta(2/3)\Gamma(1/3)(2\pi)^{1/3}}{10\Gamma(2/3)}}& \textrm{ if $i = 1.$ }\\
\end{cases}$$
\end{theorem}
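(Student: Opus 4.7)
The plan is to deduce Theorem \ref{secondsimple} directly from Theorem 7 of \cite{simple}, which supplies a two-term asymptotic with power-saving error of the shape displayed in the paragraph immediately preceding the theorem statement, valid whenever the family $\Sigma$ is \emph{strongly acceptable}. First, I would verify that the family $\Sigma^S$ consisting of rings of integers of cubic fields in $\cK_S$ is strongly acceptable: its local specification is $\Sigma_p = \Sigma_p^{\rm tr}$ for $p \in S$ and $\Sigma_p = \Sigma_p^{\rm ntr}$ for $p \notin S$, both of which are verified to be strongly acceptable in \cite{simple}. Applying Theorem 7 of \cite{simple} then produces the main term of order $X$, the secondary term of order $X^{5/6}$, and the error term $O_\epsilon(X^{5/6-1/48+\epsilon})$ appearing in the statement.

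Next, I would evaluate the two Euler products appearing in the main and secondary constants. The leading term reproduces exactly the computation in the proof of Theorem \ref{simple}(a), yielding $\frac{3}{n_i\pi^2}\cdot\prod_{p \in S}\tfrac{1}{p(p+1)}$. For the secondary constant, the nontrivial input is the local weighted integral
\[
I_p \;:=\; \sum_{R \in \Sigma_p} \frac{1}{\Disc_p(R)\cdot \#\Aut(R)}\int_{(R/\Z_p)^{\rm Prim}} i(x)^{2/3}\,dx,
\]
which must be evaluated separately in the totally ramified and not-totally-ramified cases. The main obstacle here — indeed the single step most susceptible to computational error — is the evaluation of $I_p$. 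I would carry this out by combining Table 1, Lemma 28, and Lemma 37 of \cite{simple}, which enumerate the isomorphism classes of maximal cubic $\Z_p$-orders of each ramification type along with their automorphism counts and the values of $i(x)$, to obtain $I_p = \tfrac{1}{p(p+1)} + \tfrac{1}{p^{4/3}(p+1)}$ for $p \in S$ and $I_p = \tfrac{p^{1/3}}{p^{1/3}-1} - \tfrac{p^{2/3}+p^{1/3}}{p(p+1)(p^{1/3}-1)}$ for $p \notin S$.

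Finally, I would assemble the Euler product $\prod_p (1-p^{-1/3})\,I_p$. Writing this as $\prod_p(1-p^{-1/3})\,I_p^{\rm ntr}$ times a correction $\prod_{p \in S} I_p^{\rm tr}/I_p^{\rm ntr}$, the generic factor simplifies algebraically to $1 - \tfrac{p^{1/3}+1}{p(p+1)}$, and the ratio at each $p \in S$ simplifies to $\tfrac{1}{p(p+1)}\cdot\tfrac{1-p^{-2/3}}{1 - \frac{p^{1/3}+1}{p(p+1)}}$, matching the expression displayed just before the theorem statement. Multiplying by the constant $c_2^{(i)}/\zeta(2)$ coming from Theorem 7 of \cite{simple} yields the stated coefficient of $X^{5/6}$, completing the proof. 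No essentially new ingredient beyond the local calculations and the black-box two-term asymptotic is needed; the algebraic simplification is routine once the local integrals are in hand.
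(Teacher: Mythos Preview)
Your proposal is correct and follows essentially the same route as the paper: verify strong acceptability of $\Sigma^S$, invoke Theorem~7 of \cite{simple} for the two-term asymptotic with error $O_\epsilon(X^{5/6-1/48+\epsilon})$, compute the local integrals $I_p$ via Table~1, Lemma~28, and Lemma~37 of \cite{simple} to obtain the same two expressions you list, and then algebraically simplify the Euler product exactly as you describe. There is no substantive difference between your argument and the paper's.
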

We are now ready to prove Theorem \ref{4}. Let $c$ be a positive integer coprime to $3$, and let $S_c$ denote the set of primes dividing $c$. Proposition \ref{decomp} combined with Theorem \ref{secondsimple} implies that
   	 \begin{eqnarray*}
	 \sum_{0 < (-1)^i\Disc(K_2)< X} \#\Cl_3^-(K_2,c) &=& 
    		  1 + 2\left. \cdot {{\ds \sum_{S \subseteq S_c}} N_3^{(i)}(\cK_S,X\cdot\prod_{p \in S} p^2)}\right. \\
		 & = &  1 + 2\cdot\left[\frac{1}{n_i}\cdot\frac{1}{2\cdot\zeta(2)}\cdot \sum_{S \subseteq S_c} \left(\prod_{p \in S} \frac{1}{p(p+1)} \cdot X \cdot \prod_{p \in S} p^2 \right.\right. \\
		 & & + \left.\frac{c_2^{(i)}}{\zeta(2)} \cdot \prod_{p} \left(1 - \frac{p^{1/3} + 1}{p(p+1)}\right) \cdot\prod_{p \in S} \bigg(\frac{1}{p(p+1)}\cdot \frac{1 - p^{-2/3}}{1-\frac{p^{1/3}+1}{p(p+1)}}\bigg)\cdot X^{5/6} \cdot \prod_{p \in S} p^{5/3}\right. \\
		 & & + \ O_{\epsilon}(X^{5/6 - 7/138 + \epsilon})\Bigg)\Bigg]. 
		 \end{eqnarray*}
Simplifying, we conclude
\begin{eqnarray*}
		\sum_{0 < (-1)^i\Disc(K_2)< X} \#\Cl_3^-(K_2,c) & = & 1 + 2\cdot \left[ \frac{1}{n_i}\cdot \prod_{p \in S}\left( 1 + \frac{p}{p+1}\right)\right.\cdot \sum_{0 < (-1)^i\Disc(K_2) < X} 1 \\
		 & & {+ \ \frac{c_2^{(i)}}{\zeta(2)}\cdot\prod_p\left(1 - \frac{p^{1/3} + 1}{p(p+1)}\right)\cdot \prod_{p \in S} \bigg(1 + \frac{p(1-p^{1/3})}{1 -\frac{p(p+1)}{p^{1/3} + 1}}\bigg)}\cdot X^{5/6}\Bigg] \\
		 & & + \ O_{\epsilon,c}(X^{5/6 - 7/138 + \epsilon}). 
		 \end{eqnarray*}
Combining with Proposition \ref{h+} and \eqref{pmdecomp}, we deduce Theorem \ref{4}.

\section*{Acknowledgements}
The author would like to thank Manjul Bhargava for suggesting this problem and answering many questions. She would also like to thank Djordjo Milovic, Carlo Pagano, Arul Shankar, and Jacob Tsimerman for helpful discussions. The author was supported by a National Defense Science \& Engineering Fellowship and NSF Grant DMS-1502834.

\end{document}